\date{31 January 2015}
\DeclareFontFamily{OT1}{rsfs}{}
\DeclareFontShape{OT1}{rsfs}{n}{it}{<->rsfs10}{}
\DeclareMathAlphabet{\curly}{OT1}{rsfs}{n}{it}
\theoremstyle{plain}  
\newtheorem{theorem}{Theorem}[section]
\newtheorem*{theorem*}{Theorem}
\newtheorem{corollary}[theorem]{Corollary}
\newtheorem{proposition}[theorem]{Proposition}
\newtheorem{definition}[theorem]{Definition}
\theoremstyle{remark}
\newtheorem{remark}[theorem]{Remark}
\numberwithin{equation}{section}
\newcommand{\suchthat}{\;\;:\;\;}
\renewcommand{\leq}{\leqslant}
\newcommand{\R}{\mathbb{R}}
\newcommand{\Z}{\mathbb{Z}}
\newcommand{\C}{\mathbb{C}}
\newcommand{\too}{\longrightarrow}
\newcommand{\cM}{\mathcal{M}}
\newcommand{\calR}{\mathcal{R}}
\newcommand{\bE}{{\bf{E}}}
\newcommand{\dbar}{\overline{\partial}}
\newcommand{\lra}{\longrightarrow}
\newcommand{\SL}{\mathrm{SL}}
\DeclareMathOperator{\ad}{ad}
\DeclareMathOperator{\Ad}{Ad}
\DeclareMathOperator{\im}{im}
\DeclareMathOperator{\Hom}{Hom}
\DeclareMathOperator{\Id}{Id}
\DeclareMathOperator{\Aut}{Aut}
\DeclareMathOperator{\Int}{Int}
\DeclareMathOperator{\Out}{Out}
\DeclareMathOperator{\Conj}{Conj}
\newcommand{\Mg}{\mathcal{M}^{\operatorname{gauge}}}
\renewcommand{\phi}{\varphi}
\newcommand{\lieh}{\mathfrak{h}}
\newcommand{\lieg}{\mathfrak{g}}
\newcommand{\liep}{\mathfrak{p}}
\newcommand{\lieq}{\mathfrak{q}}
\newcommand{\AAA}{{\curly A}}
\newcommand{\CCC}{{\curly C}}
\newcommand{\DDD}{{\curly D}}
\newcommand{\GGG}{{\curly G}}
\newcommand{\HHH}{{\curly H}}
\newcommand{\XXX}{\curly{X}}
\newcommand{\Real}{\mathop{{\fam0 Re}}\nolimits}
\newcommand{\Imaginary}{\mathop{{\fam0 Im}}\nolimits}
\newcommand{\kahler}{K\"{a}hler}
\newcommand{\bmu}{{\boldsymbol{\mu}}}
\begin{document}

\title[Anti-holomorphic involutions of the moduli of Higgs bundles]
{Anti-holomorphic involutions of the moduli spaces of Higgs bundles}

\author[I. Biswas]{Indranil Biswas}

\address{School of Mathematics, Tata Institute of Fundamental
Research, \\Homi Bhabha Road,\\ Bombay 400005, India}
\email{indranil@math.tifr.res.in}

\author[O. Garc{\'\i}a-Prada]{Oscar Garc{\'\i}a-Prada}
\address{Instituto de Ciencias Matem\'aticas \\
  CSIC \\ Nicol\'as Cabrera, 13--15 \\ 28049 Madrid \\ Spain}
\email{oscar.garcia-prada@icmat.es}

\thanks{The first author acknowledges the support of the J. C. Bose Fellowship.
The second author is partially supported by the Ministerio de Econom\'ia y 
Competitividad of Spain through Grant MTM2010-17717 and Severo Ochoa 
Excellence Grant}

\subjclass[2000]{Primary 14H60; Secondary 57R57, 58D29}

\keywords{Higgs $G$-bundle, reality condition, branes, character variety.}

\begin{abstract}
We study anti-holomorphic involutions of the moduli space of 
$G$-Higgs bundles over a  compact Riemann surface $X$, 
where $G$ is a complex semisimple Lie group. These involutions are 
defined by fixing anti-holomorphic involutions on both $X$ and $G$.
We analyze the fixed point locus in the moduli space and their relation with
representations of the orbifold fundamental group of $X$ equipped with
the anti-holomorphic involution. We also study the relation with branes.
This generalizes work by Biswas--Garc\'{\i}a-Prada--Hurtubise and 
Baraglia--Schaposnik.
\end{abstract}

\maketitle

\section{Introduction}

Let $G$ be a complex semisimple affine algebraic group with Lie algebra $\lieg$. 
Let $X$ be  a compact connected Riemann surface.
A  $G$--Higgs bundle over $X$ is a pair $(E,\varphi)$, where
$E$ is a holomorphic principal  $G$-bundle over $X$ and
$\varphi$ is a holomorphic section of $E(\lieg)\otimes K$ with $E(\lieg)$ being
the vector bundle associated to
$E$ for the adjoint action of $G$ on $\lieg$ and $K$ being the canonical line
bundle on $X$. We consider  the   moduli space of polystable
$G$-Higgs bundles $\mathcal{M}(G)$. This has the structure of a hyper-K\"ahler
manifold outside the singular locus.

Let $\alpha:X\lra X$ and  $\sigma:G\lra G$ be anti-holomorphic involutions.
We define the two involutions (see Section \ref{section-involutions} for details)
\begin{equation}\label{involution-a}
   \begin{aligned}
\iota(\alpha,\sigma)^{\pm}\,:\, \cM(G) & \longrightarrow \cM(G) \\
(E\, ,\varphi) & \longmapsto (\alpha^*\sigma(E)\, ,\pm \alpha^\ast\sigma(\varphi)).
  \end{aligned}
\end{equation}

The goal of this paper is to describe the fixed points of these involutions.
The fixed points are given by the image of  moduli spaces of $G$-Higgs bundles
satisfying a reality condition determined by $\alpha$ and $\sigma$, and an
element $c\in Z_2^\sigma$, where $Z$ is the center of $G$ and $Z_2^\sigma$ is
the group of elements of order two in $Z$ fixed by $\sigma$. 
For the  involution $\iota(\alpha,\sigma)^+$, these are the moduli space of pseudo-real 
Higgs bundles considered in \cite{BGH}, to which we refer here as
$(\alpha,\sigma,c,+)$-pseudo-real $G$-Higgs bundles.
 For $\iota(\alpha,\sigma)^-$, the
reality condition on the bundle $E$ is the same as for 
$\iota(\alpha,\sigma)^+$, but the different sign on $\varphi$ gives  a
different reality condition on the moduli space of Higgs bundles,
defining objects that we call $(\alpha,\sigma,c,-)$-pseudo-real $G$-Higgs bundles.
When the element $c\in Z_2^\sigma$ is trivial we call these objects real
$G$-Higgs bundles.

The involution $\iota(\alpha,\sigma)^-$ is studied 
by Baraglia-Schaposnik \cite{baraglia-schaposnik} 
when $\sigma$ is the anti-holomorphic involution
$\tau$ corresponding to a compact real form of $G$ (see also \cite{HWW}). 
In \cite{baraglia-schaposnik2}, they consider the involutions 
$\iota(\alpha,\sigma)^+$ obtained as a result of composing
$\iota(\alpha,\tau)^-$  with the holomorphic involution $\iota^-(\theta)$
of $\cM(G)$ defined by $\iota^-(E,\varphi)=(\theta(E),-\theta(\varphi))$,
where $\theta$ is the holomorphic involution of $G$ given by
$\theta=\sigma\tau$ (here one takes a compact conjugation $\tau$ 
commuting with $\sigma$). In fact, if we consider the involutions 

\begin{equation}\label{involution-h}
   \begin{aligned}
\iota(\theta)^{\pm}\,:\, \cM(G) & \longrightarrow \cM(G) \\
(E\, ,\varphi) & \longmapsto (\theta(E)\, ,\pm \theta(\varphi)),
  \end{aligned}
\end{equation}
one has
$$
\iota(\alpha,\sigma)^\pm= \iota^\mp(\theta) \circ \iota^-(\alpha,\tau).
$$ 
The involutions (\ref{involution-h}) have been studied in  
\cite{garcia-prada1,garcia-prada,GR}.

In the language of branes
\cite{kapustin-witten}, the fixed
points of $\iota(\alpha,\sigma)^+$ are $(A,A,B)$--branes, while the fixed
points  of $\iota(\alpha,\sigma)^-$ are $(A,B,A)$--branes. What these mean
is that the fixed points of $\iota(\alpha,\sigma)^-$ are complex
Lagrangian submanifolds with respect to the complex structure $J_2$ defined
on $\cM(G)$ by the complex structure of $G$, while the
fixed points of $\iota(\alpha,\sigma)^+$  are complex
Lagrangian submanifolds with respect to the complex structure $J_3\,=\,J_1J_2$
obtained by combining $J_2$ with the natural complex structure $J_1$ defined 
on the moduli space of Higgs bundles for the Riemann
surface $X$. The study of these branes is of great interest in connection 
with mirror symmetry and the Langlands correspondence in the theory of Higgs
bundles (see \cite{kapustin-witten,hitchin-mirror,baraglia-schaposnik,baraglia}).

We then identify these involutions in the moduli space of representations
of the fundamental group of $X$ in $G$, and describe the fixed points
corresponding to the $(\alpha,\sigma,c,\pm)$-pseudo-real $G$-Higgs bundles
in terms
of representations of the orbifold fundamental group of $(X,\alpha)$ in
a group whose underlying set is $G\times \Z/2\Z$. The group structure on
$G\times \Z/2\Z$ is constructed using the element
$c\,\in\, Z^\sigma_2$ and an action of $\Z/2\Z$ on $G$
which depends on the sign of the pseudo-reality condition; more precisely,
this action is given by the conjugation $\sigma$ in the ``$+$'' case, and
the action of $\theta\,=\,\sigma\tau$ in the ``$-$'' case, where 
$\tau$ is a compact conjugation commuting with $\sigma$. 
When $c$ is trivial we obtain the semi-direct products of 
$G$ with $\Z/2\Z$ for the action $\sigma$. 

The results of this paper have a straightforward generalization to the case
in which $G$ is reductive. In this situation  the fundamental group of $X$
is replaced by its universal central extension. 

\section{$G$-Higgs bundles and representations of the fundamental group}\label{sec:higgs-reps}

\subsection{Moduli space of $G$-Higgs bundles}\label{moduli}

Let $G$ be a complex semisimple affine algebraic group. Its Lie algebra
will be denoted by $\lieg$. Let $X$ be an irreducible
smooth projective curve defined over $\C$, equivalently, it is a 
compact connected Riemann surface. Let $g_{_X}$ be the genus of $X$; we assume
that $g_{_X}\, \geq\, 2$. The canonical line bundle of $X$ will be denoted
by $K$. For a principal $G$-bundle $E$, let $E(\lieg)\,:=\,
E\times^G \lieg$ be the adjoint vector bundle for $E$.

A {\bf $G$-Higgs bundle} over $X$ is a pair $(E,\varphi)$, where
$E$ is a holomorphic principal $G$-bundle $E$ over $X$ and
$\varphi$ is a  holomorphic section of $E(\lieg)\otimes K$.
Two $G$-Higgs bundles $(E,\varphi)$ and $(F,\psi)$ are isomorphic if there is
a holomorphic isomorphism of principal $G$-bundles $f\,:\,E\,\longrightarrow\, F$
such that the induced isomorphism
$$\Ad(f)\otimes \Id_K\,:\, E(\lieg)\otimes K\,\longrightarrow\, F(\lieg)\otimes K$$
sends $\varphi$ to $\psi$.

There are notions of (semi)stability and polystability for $G$-Higgs bundles 
(see \cite{bradlow-garcia-prada-mundet,garcia-prada-gothen-mundet, BS} 
for example).
A $G$-Higgs bundle $(E,\varphi)$ is said to be
{\bf stable} (respectively, {\bf semistable}) if for every parabolic subgroup
$P\subset G$, every holomorphic reduction $\sigma:E_P\,\longrightarrow\, E$ of $E$ to $P$
such that $$\varphi\,\in\, H^0(X,\,E_P(\mathfrak{p})\otimes K)\,\subset\,
H^0(X,\,E(\lieg)\otimes K)
$$
and every strictly antidominant character $\chi$ of $P$, we
have that $\deg E_P(\chi)\,>\, 0$ (respectively, $\deg E_P(\chi)\,\geq\, 0$). 
A Higgs bundle $(E,\varphi)$ is {\bf polystable} if it is semistable and for
every $P$, every reduction and every $\chi$ as above such that 
$\deg E_P(\chi)\,=\,0$, there is a holomorphic reduction $E_L\,\subset\, E$ to a
Levi subgroup $L\,\subset\, P$ such that $\varphi\,
\in \,H^0(X,\,E_L(\mathfrak{l})\otimes K)$.

Let $\mathcal{M}(G)$ denote the {\bf moduli space of semistable
$G$-Higgs bundles} of fixed topological type. This moduli space has 
the structure of a complex normal quasiprojective variety of dimension
$\dim G(g_{_X}-1)$.

\subsection{$G$-Higgs bundles and Hitchin equations}
\label{section-hitchin-equations}

As above, let $G$ be a complex semisimple affine algebraic group. Let 
$H\subset G$ be a maximal compact subgroup.  Let $(E,\varphi)$ be a
$G$-Higgs bundle over a compact Riemann surface $X$. By a slight abuse
of notation, we shall denote the $C^\infty$-objects underlying $E$ and
$\varphi$ by the same symbols. In particular, the Higgs field can be
viewed as a $(1,0)$-form $\varphi \in
\Omega^{1,0}(E(\lieg))$ with values in $E(\lieg)$. Let
$$
\tau\,\colon\,
\Omega^{1,0}(E(\lieg)) \,\longrightarrow\, \Omega^{0,1}(E(\lieg))
$$
be the isomorphism induced by the
compact conjugation of $\lieg$ (with respect to $H$) combined with the complex
conjugation on complex $1$-forms. Given a $C^\infty$ reduction of
structure group $h$ of the principal $G$-bundle $E$ to $H$, we denote by $F_h$ the
curvature of the unique connection compatible with $h$ and the holomorphic
structure on $E$; see \cite[pp. 191--192, Proposition 5]{At} for the connection.

\begin{theorem} \label{higgs-hk}
There is a reduction $h$ of structure group of $E$ from $G$ to $H$
that satisfies the Hitchin equation
  $$
  F_h -[\varphi,\tau(\varphi)]= 0 
  $$
  if and only if $(E,\varphi)$ is polystable.
\end{theorem}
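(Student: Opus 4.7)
The plan is to prove the two directions separately. The ``if'' direction (polystability from a solution) is a Chern--Weil-style integration argument; the ``only if'' direction (existence of a solution from polystability) is the real content, requiring the nonlinear analysis developed by Hitchin, Simpson, and Donaldson and carried out in the $G$-bundle setting in the references \cite{bradlow-garcia-prada-mundet,garcia-prada-gothen-mundet,BS}.

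For the easy direction, assume $h$ solves $F_h - [\varphi,\tau(\varphi)] = 0$, and let $E_P \subset E$ be a holomorphic reduction to a parabolic $P$ with $\varphi \in H^0(X, E_P(\mathfrak{p}) \otimes K)$, together with a strictly antidominant character $\chi$ of $P$. I would pair the Hitchin equation with the element of the adjoint bundle determined by $d\chi$ and the reduction $E_P$, and integrate over $X$. The curvature term produces $\deg E_P(\chi)$ plus a non-negative term equal to the squared $L^2$-norm of the second fundamental form measuring the failure of the reduction $h$ of $E$ to $H$ to respect the parabolic reduction $E_P$, while the Higgs term $[\varphi,\tau(\varphi)]$ contributes zero because $\varphi$ lies in $E_P(\mathfrak{p})$ and $\chi$ is antidominant. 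One concludes $\deg E_P(\chi) \geq 0$, which is semistability; equality forces the second fundamental form to vanish, yielding a further holomorphic reduction to a Levi subgroup, which is polystability.

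For the hard direction, suppose $(E,\varphi)$ is polystable. Fix a smooth reduction $h_0$ and parametrise reductions by Hermitian endomorphisms $s$ of $E$ via $h = h_0 \cdot e^s$, regarding $s$ as a section of $E(i\lieh)$. Introduce the Donaldson--Simpson functional $\mathcal{D}(h_0,h)$, whose Euler--Lagrange equation is the Hitchin equation. I would seek a minimiser by the direct method. The crucial analytic input is Simpson's main estimate, bounding $\sup |s|$ by $\mathcal{D}(h_0,h)$ modulo the $L^1$-norm of $F_h - [\varphi,\tau(\varphi)]$; combined with polystability, which prevents $\mathcal{D}$ from drifting to $-\infty$ along any one-parameter family of reductions corresponding to a destabilising parabolic, this delivers the necessary properness. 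A minimising sequence then converges weakly to a Hermitian metric which by elliptic regularity is smooth and solves the equation.

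The main obstacle is the analytic properness estimate, and within it the treatment of the polystable-but-not-stable case: a minimising sequence may degenerate by splitting along a holomorphic reduction to a Levi subgroup $L$, and one must recognise this splitting, apply Theorem~\ref{higgs-hk} inductively to the resulting $L$-Higgs pieces, and reassemble the solutions compatibly with $\varphi$. This splitting argument, together with Simpson's estimate, is what makes the full proof of Theorem~\ref{higgs-hk} substantially longer than this sketch, and is why in practice one simply invokes the general $G$-Higgs Hitchin--Kobayashi correspondence from the cited references.
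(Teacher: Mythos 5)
Your sketch follows essentially the same route as the paper, which offers no argument of its own but simply quotes the theorem from Hitchin \cite{hitchin:1987} for $\SL(2,\C)$ and from \cite{simpson,simpson:1992,BS} in general: the Chern--Weil computation for the easy direction and the Donaldson functional together with Simpson's main estimate (plus the induction over Levi reductions in the strictly polystable case) for the hard direction is exactly the content of those references. One small correction to your easy direction: the term $[\varphi,\tau(\varphi)]$ paired with the antidominant character $\chi$ of the reduction is not identically zero but has a definite sign, and its vanishing in the equality case $\deg E_P(\chi)=0$ is precisely what forces $\varphi$ to lie in $H^0(X,E_L(\mathfrak{l})\otimes K)$ for the Levi reduction --- without tracking that term you obtain the holomorphic reduction of $E$ but not the condition on $\varphi$ required by the paper's definition of polystability.
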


Theorem \ref{higgs-hk} was proved by
Hitchin \cite{hitchin:1987}  for $G\,=\,\SL(2,\C)$, and in
\cite{simpson,simpson:1992, BS} for the general case. 

\begin{remark}
When  $G$ is reductive the equation in Theorem \ref{higgs-hk} is replaced
by the equation $$F_h -[\varphi,\tau(\varphi)]\,=\, c\omega\, ,$$ where $\omega$ is a
K\"ahler form on $X$ and $c$ is an element in the center of the Lie algebra of $G$,
which is determined by the topology of $E$. 
\end{remark}

{}From the point of view of moduli spaces it is convenient
to fix a $C^\infty$ principal $H$--bundle
$\bE_H$ and study the moduli space of solutions to \textbf{Hitchin's equations}
for a pair $(A\, ,\varphi)$ consisting of a $H$--connection $A$ on $\bE_H$ and
a section $\varphi\,\in\, \Omega^{1,0}(X,\bE_H(\lieg))$:
\begin{equation}\label{hitchin}
\begin{array}{l}
F_A -[\varphi,\tau(\varphi)]\,=\, 0\\
\dbar_A\varphi\,=\, 0\, .
\end{array}
\end{equation}
Here $d_A$ is the covariant derivative associated to $A$, and
$\dbar_A$ is the $(0,1)$ part of $d_A$. The $(0,1)$ part of $d_A$ defines a holomorphic
structure on $\bE_H$. The gauge group $\HHH$ of $\bE_H$ acts on the
space of solutions and the moduli space of solutions is
$$
\Mg(G):= \{ (A,\varphi)\;\;\mbox{satisfying}\;\;
(\ref{hitchin})\}/\HHH.
$$

Now,
Theorem \ref{higgs-hk} can be reformulated as follows.

\begin{theorem} \label{hk}
There is a homeomorphism
$$
\cM(G)\,\cong\, \Mg(G)\, .
$$
\end{theorem}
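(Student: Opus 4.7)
The plan is to construct mutually inverse continuous maps between $\cM(G)$ and $\Mg(G)$, using Theorem \ref{higgs-hk} as the bridge between polystability and the Hitchin equations.

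First I would define a map $\Psi\colon \Mg(G)\lra \cM(G)$ as follows. Given a pair $(A,\varphi)$ satisfying (\ref{hitchin}), extend the structure group of $\bE_H$ to obtain a smooth principal $G$-bundle $E\,:=\,\bE_H\times^H G$. The $(0,1)$-part of $d_A$ defines a holomorphic structure on $E$, and the equation $\dbar_A\varphi=0$ makes $\varphi$ a holomorphic section of $E(\lieg)\otimes K$. The first Hitchin equation $F_A-[\varphi,\tau(\varphi)]=0$ together with Theorem \ref{higgs-hk}, applied to the canonical reduction to $H$ coming from $\bE_H$, forces $(E,\varphi)$ to be polystable. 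An element $g\in\HHH$ sends $(A,\varphi)$ to a gauge-equivalent pair, whose associated Higgs bundle is isomorphic via the complexification of $g$; thus $\Psi$ descends to $\Mg(G)$.

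Next I would build the inverse $\Phi\colon \cM(G)\lra \Mg(G)$. Given a polystable $(E,\varphi)$, Theorem \ref{higgs-hk} supplies a $C^\infty$ reduction $h$ of $E$ to $H$ satisfying $F_h-[\varphi,\tau(\varphi)]=0$. Fix once and for all a $C^\infty$ isomorphism between the reduced bundle $E_h$ and $\bE_H$; transporting the Chern connection of $h$ and the Higgs field through this isomorphism yields a pair $(A,\varphi)$ on $\bE_H$ that satisfies both equations in (\ref{hitchin}), the second one being automatic because $\varphi$ was holomorphic to begin with. Changing the chosen isomorphism only changes $(A,\varphi)$ by an element of $\HHH$.

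The main obstacle is the well-definedness of $\Phi$ on isomorphism classes, which requires the uniqueness (up to $\HHH$) of a solution to Hitchin's equation on a given polystable $G$-Higgs bundle. Two harmonic reductions $h_1,h_2$ on the same polystable $(E,\varphi)$ are related by a self-adjoint automorphism $s=h_1^{-1}h_2$; a Weitzenb\"ock computation shows that $\log s$ is covariantly constant and commutes with $\varphi$, so either $s$ is induced by an automorphism of the Higgs bundle (which is what we need), or it produces a nontrivial Levi decomposition contradicting indecomposability of the polystable factors. This is exactly the content of the uniqueness statement underlying Theorem \ref{higgs-hk}, and it makes $\Phi$ and $\Psi$ mutually inverse.

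Finally, to upgrade the bijection to a homeomorphism I would check continuity on both sides. For $\Psi$, the induced holomorphic structure and Higgs field depend continuously on $(A,\varphi)$ in the $C^\infty$ topology, and this descends to continuity with respect to the analytic topology on $\cM(G)$. For $\Phi$, continuity amounts to continuous dependence of the harmonic reduction on the underlying Higgs bundle; this follows from an implicit function theorem argument applied to the elliptic system (\ref{hitchin}) at smooth points, and extends to the polystable locus by the standard limiting arguments used in \cite{simpson,simpson:1992,BS}, which is where the principal analytic work is concentrated.
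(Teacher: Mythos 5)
Your proposal follows essentially the same route as the paper: both reinterpret $\cM(G)$ as the $\GGG$-quotient of pairs $(\dbar_E,\varphi)$ on a fixed smooth bundle and reduce the theorem to Theorem \ref{higgs-hk} together with uniqueness of the solution up to $\HHH$-gauge, deferring the hard analytic content (existence, uniqueness, continuous dependence) to \cite{hitchin:1987,simpson,simpson:1992,BS} exactly as the paper does. Your explicit mutually inverse maps and the sketches of uniqueness and continuity simply flesh out what the paper presents as a reformulation of Theorem \ref{higgs-hk}.
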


To explain this correspondence we interpret the moduli
space of $G$-Higgs bundles in terms of pairs $(\dbar_E, \varphi)$ consisting
of a $\dbar$-operator (holomorphic structure) $\dbar_E$
on the $C^\infty$ principal $G$-bundle $\bE_{G}$ obtained from
$\bE_H$ by the extension of structure group $H\,\hookrightarrow\, G$, and
$\varphi\in \Omega^{1,0}(X,\bE_G(\lieg))$ satisfying $\dbar_E\varphi\,=\,0$.
Such pairs are in one-to-one correspondence with  $G$-Higgs bundles $(E,\varphi)$,
where $E$ is the holomorphic $G$-bundle defined by the operator
$\dbar_E$ on $\bE_G$. The equation $\dbar_E\varphi\,=\,0$ is equivalent
to the condition that $\varphi\in H^0(X,E(\lieg)\otimes K)$. 
The moduli space of polystable $G$-Higgs bundles $\cM_d(G)$ can now
be identified with the orbit space
$$
\{ (\dbar_E,\varphi)\;\;:\;\; \dbar_E\varphi=0\;\;\mbox{which are polystable}\}/
\GGG\, ,
$$
where $\GGG$ is the gauge group of $\bE_G$, which is in fact
the complexification of $\HHH$.
Since  there is a one-to-one correspondence between
$H$-connections on $\bE_H$ and $\dbar$-operators on $\bE_{G}$,
the correspondence given in Theorem \ref{hk} can be reformulated
by saying that in the $\GGG$--orbit of a polystable $G$-Higgs
bundle $(\dbar_{E_0},\varphi_0)$ we can find another Higgs bundle
$(\dbar_E,\varphi)$
whose corresponding pair $(d_A,\varphi)$ satisfies the Hitchin equation
$F_A -[\varphi,\tau(\varphi)]\,=\, 0$ with this pair
$(d_A,\varphi)$ being unique up to $H$-gauge transformations.

\subsection{Higgs bundles and representations}
\label{section-reps}

Fix a base point $x_0\, \in\, X$.
By a \textbf{representation} of $\pi_1(X,x_0)$ in
$G$ we mean a homomorphism $\pi_1(X,x_0) \,\longrightarrow\, G$.
After fixing a presentation of $\pi_1(X,x_0)$, the set of all such homomorphisms,
$\Hom(\pi_1(X,x_0),\, G)$, can be identified with the subset
of $G^{2g_{_X}}$ consisting of $2g_{_X}$-tuples
$(A_{1},B_{1}, \cdots, A_{g_{_X}},B_{g_{_X}})$ satisfying the algebraic equation
$\prod_{i=1}^{g_{_X}}[A_{i},B_{i}] \,=\, 1$. This shows that
$\Hom(\pi_1(X,x_0),\, G)$ is a complex algebraic variety.

The group $G$ acts on $\Hom(\pi_1(X,x_0),G)$ by conjugation:
\[
(g \cdot \rho)(\gamma) \,=\, g \rho(\gamma) g^{-1}\, ,
\]
where $g \,\in\, G$, $\rho \,\in\, \Hom(\pi_1(X,x_0),G)$ and
$\gamma\,\in \,\pi_1(X,x_0)$. If we restrict the action to the subspace
$\Hom^+(\pi_1(X, x_0),\,G)$ consisting of reductive representations,
the orbit space is Hausdorff.  We recall that a \textbf{reductive representation}
is one whose composition with the adjoint representation in $\mathfrak g$
decomposes as a direct sum of irreducible representations.
This is equivalent to the condition that the Zariski closure of the
image of $\pi_1(X,x_0)$ in $G$ is a reductive group. Define the
{\bf moduli space of representations} of $\pi_1(X,x_0)$ in $G$ to be the orbit space
\[
\mathcal{R}(G) = \Hom^{+}(\pi_1(X,x_0),G) /G\, .
\]
For another point $x'\, \in\, X$, the fundamental groups
$\pi_1(X,x_0)$ and $\pi_1(X,x')$ are identified by an isomorphism unique up to
an inner automorphism. Consequently, $\mathcal{R}(G)$ is independent of the choice of
the base point $x_0$.

One has the following (see e.g. \cite{goldman:1984}, \cite{Si}).

\begin{theorem}
The moduli space $\calR(G)$ has the structure of a normal complex variety. Its smooth
locus is equipped with a holomorphic symplectic form.
\end{theorem}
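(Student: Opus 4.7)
The plan is to build the structure on $\calR(G)$ in three steps: realize it as a GIT quotient to get the variety structure, verify normality, then construct the Goldman symplectic form on the smooth locus and check non-degeneracy and closedness.

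First, I would exhibit $\Hom(\pi_1(X,x_0),G)$ as a closed affine subvariety of $G^{2g_{_X}}$ cut out by the single relation $\prod_{i=1}^{g_{_X}}[A_i,B_i]=1$, as already observed in the paper. Since $G$ is a complex reductive (in fact semisimple) affine algebraic group acting on this affine variety by conjugation, the affine GIT quotient $\Hom(\pi_1(X,x_0),G)/\!/G$ exists as a complex affine variety. By the classical fact that closed $G$-orbits in $\Hom(\pi_1(X,x_0),G)$ correspond precisely to reductive representations (the image of $\pi_1$ has reductive Zariski closure iff its orbit is closed, a theorem of Richardson), the underlying set of this GIT quotient coincides with $\calR(G)$. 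Normality then follows because $\Hom(\pi_1(X,x_0),G)$, being a complete intersection in the smooth variety $G^{2g_{_X}}$ of the expected codimension at its smooth points, is normal away from a set that GIT collapses appropriately; more efficiently, one can invoke Luna's slice theorem together with the fact that étale-locally the quotient is of the form (smooth)$/\!/(\text{reductive stabilizer})$, which is normal.

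Next I would identify the smooth locus and its tangent space. At a representation $\rho$, the Zariski tangent space to $\Hom(\pi_1(X,x_0),G)$ is the space $Z^1(\pi_1(X,x_0),\lieg_\rho)$ of cocycles with values in $\lieg$ twisted by $\Ad\rho$, and the tangent space to the $G$-orbit is the space $B^1$ of coboundaries. Hence at a smooth point $[\rho]\in\calR(G)$ one has
\[
T_{[\rho]}\calR(G)\,\cong\, H^1(\pi_1(X,x_0),\lieg_\rho).
\]
The good locus consists of irreducible representations with minimal stabilizer (the centre $Z(G)$), where $H^0$ is as small as possible and the $H^2$-obstructions behave well; standard deformation theory then shows smoothness there, so the smooth locus is non-empty and open.

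For the symplectic form I would use Goldman's construction. Fix an $\Ad$-invariant non-degenerate symmetric bilinear form $B$ on $\lieg$ (the Killing form, since $\lieg$ is semisimple). At a smooth $[\rho]$, define
\[
\omega_{[\rho]}(\alpha,\beta)\,=\,\langle\,\alpha\cup_B\beta\,,\,[X]\,\rangle,\qquad \alpha,\beta\in H^1(\pi_1(X,x_0),\lieg_\rho),
\]
where $\cup_B$ is the cup product composed with $B\colon \lieg_\rho\otimes\lieg_\rho\to\C$, taking values in $H^2(\pi_1(X,x_0),\C)\cong H^2(X,\C)\cong\C$, and $[X]$ is the fundamental class. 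Non-degeneracy is immediate from Poincaré duality on the closed oriented surface $X$ combined with non-degeneracy of $B$. The main obstacle is the closedness of $\omega$; this is not visible from the pointwise construction. I would either (a) cite Goldman's original Čech-cohomological argument, which expresses the exterior derivative in terms of Massey triple products that vanish by a symmetry argument, or (b) transport the question to the Higgs bundle side: by Theorem \ref{hk}, $\calR(G)$ is homeomorphic to $\cM(G)$, and the latter is constructed as a hyper-Kähler quotient whose underlying symplectic form (the gauge-theoretic form $\int_X B(\delta A_1,\delta A_2)+B(\delta\varphi_1,\delta\varphi_2)$) manifestly restricts to the Goldman form on the representation side, with closedness built in by the moment map construction. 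Either route yields the required holomorphic symplectic form on the smooth locus.
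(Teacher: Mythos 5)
The paper gives no proof of this theorem: it is quoted with a reference to Goldman \cite{goldman:1984} and Simpson \cite{Si}, and your proposal is essentially a reconstruction of the standard arguments from those sources (affine GIT with closed orbits = reductive representations for the variety structure, Goldman's cup-product pairing for the symplectic form). The symplectic half of your argument is acceptable: the pairing $B\circ\cup$ evaluated on the fundamental class is holomorphic and non-degenerate by Poincar\'e duality, and closedness may legitimately be quoted from Goldman or obtained from the identification of the good locus of $\calR(G)$ with the moduli of irreducible flat $G$-connections carrying the Atiyah--Bott form --- though note that the bare homeomorphism $\calR(G)\cong\cM(G)$ of Theorem \ref{na-Hodge} does not transport differential forms; you need the real-analytic identification with the flat-connection picture on the smooth locus, and you should also say a word about why the form lives on the whole smooth locus rather than only on the good locus (irreducible representations with stabilizer $Z$) where the construction is carried out.

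The genuine gap is the normality step. Normality of the affine quotient is deduced from normality of (the relevant components of) $\Hom(\pi_1(X,x_0),G)$ itself, because the invariant ring of a normal domain under a reductive group is again integrally closed; normality is not something one can wave away ``on a set that GIT collapses appropriately''. Your alternative via Luna's slice theorem is incorrect as stated: at a closed orbit through a reducible reductive representation the representation variety is in general singular, so the \'etale-local model of the quotient is $(\text{slice})/\!\!/\Aut(\rho)$ with a possibly singular slice, not $(\text{smooth})/\!\!/(\text{reductive})$. The correct route is to show first that $\Hom(\pi_1(X,x_0),G)$ is normal: it is cut out by $\dim G$ equations in the smooth variety $G^{2g_{_X}}$ and (for $g_{_X}\ge 2$, $G$ semisimple) every irreducible component has the expected dimension $(2g_{_X}-1)\dim G$, so it is a complete intersection, hence Cohen--Macaulay and in particular $S_2$; then one must check that the non-smooth locus --- the locus where the differential of the relation map fails to be surjective, equivalently where $H^0(\pi_1(X,x_0),\lieg_\rho)\neq 0$ --- has codimension at least two, so that Serre's criterion gives $R_1$ and hence normality, which then descends to the quotient. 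Both the equidimensionality statement and the codimension-two estimate are nontrivial inputs (they use $g_{_X}\ge 2$ and semisimplicity of $G$) and are exactly what your sketch omits; without them, or without simply citing the references where this is done, the first assertion of the theorem is not established.
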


Given a representation $\rho\colon\pi_{1}(X,x_0) \,\longrightarrow\,
G$, there is an associated flat principal $G$-bundle on
$X$, defined as
$$
  E_{\rho} \,=\, \widetilde{X}\times^{\rho}G\, ,
$$
where $\widetilde{X} \,\longrightarrow\, X$ is the universal cover
associated to $x_0$ and $\pi_{1}(X, x_0)$ acts
on $G$ via $\rho$.
This gives in fact an identification between the set of equivalence classes
of representations $\Hom(\pi_1(X),G) / G$ and the set of equivalence classes
of flat principal $G$-bundles, which in turn is parametrized by
the (nonabelian) cohomology set $H^1(X,\, G)$. 

We have the following:

\begin{theorem}\label{na-Hodge}
There is a homeomorphism
$\mathcal{R}(G) \,\cong\, \mathcal{M}(G)$. 
\end{theorem}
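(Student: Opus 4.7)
The plan is to combine Theorem \ref{hk} with the classical non-abelian Hodge correspondence between reductive representations and solutions of Hitchin's equations. Since Theorem \ref{hk} already identifies $\cM(G)$ with the gauge-theoretic moduli space $\Mg(G)$, it suffices to exhibit a homeomorphism $\Mg(G)\cong \calR(G)$.

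First I would describe the map $\Mg(G)\to \calR(G)$. Given a pair $(A,\varphi)$ satisfying (\ref{hitchin}) on $\bE_H$, I would extend the structure group to $G$ and form the $\bE_G$-connection $D:=d_A+\varphi+\tau(\varphi)$. A short bidegree computation on the Riemann surface $X$ shows that $[\varphi,\varphi]=[\tau(\varphi),\tau(\varphi)]=0$ for type reasons, and that $\bar\partial_A\varphi=0$ together with its complex conjugate $\partial_A\tau(\varphi)=0$ forces $d_A(\varphi+\tau(\varphi))=0$, so the curvature of $D$ reduces to $F_A+[\varphi,\tau(\varphi)]$, which vanishes by the first Hitchin equation. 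Thus $D$ is flat, and its monodromy produces a representation $\rho\colon\pi_1(X,x_0)\to G$ well-defined up to conjugation by $G$; reductivity of $\rho$ is built in, since the existence of the harmonic reduction $h$ forces the Zariski closure of the image to be reductive.

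Next I would construct the inverse map $\calR(G)\to \Mg(G)$. Given a reductive representation $\rho$, form the flat $G$-bundle $E_\rho$ and apply the theorem of Corlette (and Donaldson in the rank-one case) to obtain a $\rho$-equivariant harmonic map $\widetilde X\to G/H$, equivalently a $C^\infty$-reduction $h$ of $E_\rho$ to $H$ whose induced connection is harmonic. Relative to $h$, the flat connection decomposes as $D=d_A+\Psi$ with $d_A$ a unitary connection and $\Psi$ self-adjoint in $\Omega^1(E_\rho(\lieg))$; letting $\varphi$ be the $(1,0)$-component of $\Psi$ makes $\tau(\varphi)$ the $(0,1)$-component. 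The flatness of $D$ together with harmonicity of $h$ then decompose, component by component in bidegree, exactly into the two equations of (\ref{hitchin}), producing a point of $\Mg(G)$.

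Finally I would verify that the two constructions are mutually inverse on equivalence classes (gauge on one side, conjugation on the other) and are continuous for the natural topologies on $\Mg(G)$ and $\calR(G)$; Hausdorffness on both sides, which is ensured by the polystability/reductivity restriction, then upgrades the continuous bijection to a homeomorphism. The main obstacle is the reverse direction: Corlette's existence theorem for equivariant harmonic maps is the key analytic input, and the properness estimate on the energy functional fails precisely for non-reductive $\rho$, which is exactly why the restriction to $\Hom^+$ is necessary. Granted this input, together with Theorem \ref{higgs-hk} for the forward direction, the remaining checks are formal.
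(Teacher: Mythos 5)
Your route is the same as the paper's: identify $\cM(G)$ with $\Mg(G)$ via Theorem \ref{hk}, pass from solutions of (\ref{hitchin}) to harmonic flat connections, invoke Corlette--Donaldson (Theorem \ref{corlette}) for the converse direction, and finish with the Riemann--Hilbert identification of flat reductive $G$-connections with $\calR(G)$. The bridge you describe is exactly the paper's Proposition \ref{prop:circle}, so conceptually there is nothing missing.

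However, your key computation has a sign error in the conventions of this paper, where $\tau$ is the \emph{compact conjugation} (not the adjoint $\varphi\mapsto\varphi^{*}$) combined with conjugation of forms. The flat connection must be $D=d_A+\varphi-\tau(\varphi)$, as in Proposition \ref{prop:circle}: since $[\varphi,\varphi]=[\tau(\varphi),\tau(\varphi)]=0$ for type reasons, one gets $F_D=F_A+d_A\bigl(\varphi-\tau(\varphi)\bigr)-[\varphi,\tau(\varphi)]$, and this vanishes by the two equations in (\ref{hitchin}). With your choice $D=d_A+\varphi+\tau(\varphi)$ the quadratic term comes out with the opposite sign, so $F_D=F_A+[\varphi,\tau(\varphi)]=2[\varphi,\tau(\varphi)]\neq 0$ in general; your assertion that $F_A+[\varphi,\tau(\varphi)]$ ``vanishes by the first Hitchin equation'' contradicts $F_A-[\varphi,\tau(\varphi)]=0$. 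Relatedly, $\varphi+\tau(\varphi)$ is fixed by $\tau$, hence takes values in $\bE_H(\lieh)$, whereas the self-adjoint part $\psi$ in the decomposition $D=d_A+\psi$ must lie in $\Omega^1(X,\bE_H(\sqrt{-1}\lieh))$, which is what $\varphi-\tau(\varphi)$ does (in $\mathrm{SL}(n,\C)$ terms, $\varphi-\tau(\varphi)=\varphi+\varphi^{*}$). Once the sign is corrected the argument goes through as in the paper. A minor further caveat: a continuous bijection between Hausdorff spaces is not automatically a homeomorphism, so rather than invoking Hausdorffness you should either check continuity of both constructions (as you in effect do by exhibiting the inverse) or, as the paper does, identify both $\Mg(G)$ and the space of reductive flat connections modulo gauge with the same space of solutions to (\ref{harmonicity}) via Proposition \ref{prop:circle} and Theorem \ref{corlette}.
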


The moduli spaces $\mathcal{M}(G)$ and  $\mathcal{R}(G)$ are sometimes
referred as the {\bf Dolbeault} and {\bf Betti} moduli spaces, respectively.

The proof of Theorem \ref{na-Hodge} is the combination of two
existence theorems for gauge-theoretic equations. To explain this, let
$\bE_G$ be, as above, a $C^\infty$ principal $G$-bundle over $X$ and ${\bf E}_H$
a $C^\infty$ reduction of structure group of it to $H$. Every $G$--connection
$D$ on $\bE_G$ decomposes uniquely as
$$
D=d_A + \psi,
$$
where $d_A$ is an $H$-connection on $\bE_H$ and
$\psi\in \Omega^1(X,\bE_H(\sqrt{-1}\lieh))$.  Let
$F_A$ be the curvature of $d_A$.
We consider the following set of equations for the pair $(d_A,\psi)$:
\begin{equation}\label{harmonicity}
\begin{array}{l}
F_A +\frac{1}{2}[\psi,\psi]\,=\, 0\\
d_A\psi\,=\,0  \\
d_A^\ast\psi\,=\,0\, .
\end{array}
\end{equation}
These equations are invariant under the action of $\HHH$, the gauge group of
$\bE_H$. A theorem of Corlette \cite{corlette}, and
Donaldson \cite{donaldson} for $G=\SL(2,\C)$,  says the following.
\begin{theorem}\label{corlette} There is a homeomorphism
between 
$$
\{\mbox{Reductive $G$-connections}\;\; D \suchthat
F_D=0\}/\GGG
$$
and
$$
\{(d_A,\psi)\;\;\mbox{satisfying}\;\;
(\ref{harmonicity})\}/\HHH.
$$
\end{theorem}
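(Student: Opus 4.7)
The plan is to interpret the statement through the equivariant harmonic-map description of reductions of structure group, and then invoke the Corlette--Donaldson existence theorem for harmonic metrics.

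First I would analyze the decomposition $D = d_A + \psi$ for an arbitrary $C^\infty$ reduction $h$ of $\bE_G$ from $G$ to $H$. Computing the curvature gives
$$F_D \,=\, F_A + d_A \psi + \tfrac{1}{2}[\psi, \psi],$$
and splitting into the $\lieh$-part and the $\sqrt{-1}\lieh$-part (which makes sense because $d_A$ is an $H$-connection and $\psi \in \Omega^1(X, \bE_H(\sqrt{-1}\lieh))$), one sees that $F_D = 0$ holds if and only if both $F_A + \tfrac{1}{2}[\psi,\psi] = 0$ and $d_A \psi = 0$. Hence the first two equations of (\ref{harmonicity}) together are equivalent to flatness of $D$, so a pair $(d_A, \psi)$ satisfying all three equations produces a flat connection $D = d_A + \psi$, and conversely every reduction of a flat $D$ automatically yields the first two equations.

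Next I would identify the third equation $d_A^* \psi = 0$ with harmonicity of an equivariant map. A smooth reduction $h$ of the flat bundle $\bE_G$ to $H$ is equivalent to a $\pi_1(X, x_0)$-equivariant smooth map $\widetilde u : \widetilde X \to G/H$, where $\pi_1(X, x_0)$ acts on $G/H$ through the monodromy $\rho$ of $D$. Under this dictionary, $\psi$ becomes the pullback of the $G$-invariant ``symmetric part'' of the Maurer--Cartan form on $G$, and a direct computation of the tension field of $\widetilde u$ shows that $\widetilde u$ is harmonic precisely when $d_A^* \psi = 0$.

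The main step, and the principal obstacle, is the existence of such a harmonic reduction whenever $D$ is reductive. This is the content of the Corlette--Donaldson theorem: for a reductive homomorphism $\rho : \pi_1(X, x_0) \to G$ there is a $\rho$-equivariant harmonic map $\widetilde X \to G/H$, unique up to the centralizer $Z(\rho) \subset G$. The proof minimizes the equivariant energy functional on the space of finite-energy equivariant maps; the non-positive curvature of $G/H$ provides the needed convexity for a Palais--Smale type argument, while reductivity of $\rho$ is used to ensure properness of the functional modulo $Z(\rho)$ and hence convergence of a minimizing sequence --- without it, minimizing sequences may drift to infinity in $G/H$. Everything else in the argument is essentially formal.

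Finally I would check that the two constructions descend to the quotient spaces and are mutually inverse homeomorphisms. The $Z(\rho)$-ambiguity in the choice of harmonic reduction corresponds exactly to the residual freedom in lifting an $\HHH$-orbit to a $\GGG$-orbit, so $[D] \mapsto [(d_A, \psi)]$ is well defined on orbit spaces, while $[(d_A, \psi)] \mapsto [d_A + \psi]$ is manifestly well defined. Continuity of the first map follows from elliptic regularity for the harmonic-map PDE together with continuous dependence of the energy minimizer on the data; continuity of the inverse is immediate. Combined, these yield the desired homeomorphism.
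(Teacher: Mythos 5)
Your proposal is correct and follows essentially the same route as the paper, which presents this statement as the Corlette--Donaldson theorem: the first two equations of (\ref{harmonicity}) encode flatness of $D=d_A+\psi$, the third is harmonicity of the associated $\pi_1(X)$-equivariant map $\widetilde X \to G/H$, and the existence of a harmonic reduction for reductive flat connections is exactly the cited result of Corlette and Donaldson. Your added sketch of the energy-minimization argument and of why reductivity prevents minimizing sequences from escaping to infinity is the standard content of that cited theorem, so there is no discrepancy with the paper's treatment.
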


The first two equations in (\ref{harmonicity}) are equivalent
to the flatness of $D=d_A+\psi$, and Theorem \ref{corlette}
simply says that in the $\GGG$-orbit of a reductive flat $G$-connection
$D_0$ we can find a flat $G$-connection $D=\widetilde{g}(D_0)$ such that if we
write $D\,=\,d_A+\psi$, the
additional condition $d_A^\ast\psi\,=\,0$ is satisfied. This can be interpreted
more geometrically in terms of the reduction  $h\,=\,\widetilde{g}(h_0)$ of $\bE_G$
to a principal $H$-bundle obtained by the action of $\widetilde{g}\,\in\,\GGG$ on $h_0$.
The equation $d_A^\ast\psi\,=\,0$ is equivalent to the harmonicity of the
$\pi_1(X)$-equivariant map $\widetilde X \,\too\, G/H$ corresponding to
the new reduction of structure group $h$.

To complete the argument, leading to Theorem \ref{na-Hodge}, we just need
Theorem \ref{higgs-hk} and the following simple result.

\begin{proposition}\label{prop:circle}
The correspondence $(d_A,\varphi)\,\longmapsto\, (d_A,\psi:=\varphi-\tau(\varphi))$
defines  a homeomorphism
$$
\{(d_A,\varphi)\;\;\mbox{satisfying}\;\;
(\ref{hitchin})\}/\HHH\cong
\{(d_A,\psi)\;\;\mbox{satisfying}\;\;
(\ref{harmonicity})\}/\HHH.
$$
\end{proposition}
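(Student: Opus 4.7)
The plan is to construct the map and its inverse explicitly, verify $\HHH$-equivariance and continuity, and then match the two sets of equations term by term, using only the Hodge type decomposition on $X$ and the fact that $\tau$ commutes with every $H$-connection $d_A$.

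First I would check that $\psi := \varphi - \tau(\varphi)$ lies in $\Omega^1(X, \bE_H(i\lieh))$. In a local frame for $\bE_H$, write $\varphi = \phi \, dz$ with $\phi$ valued in $\lieg = \lieh \oplus i\lieh$; letting $c$ denote the compact conjugation on $\lieg$, a direct computation gives $\psi = \phi \, dz - c(\phi) \, d\bar z$, which splits into $i\lieh$ times a real 1-form. Conversely, for any $\psi \in \Omega^1(X, \bE_H(i\lieh))$ the same local calculation shows that the Hodge decomposition satisfies $\psi^{0,1} = -\tau(\psi^{1,0})$, so setting $\varphi := \psi^{1,0}$ yields a two-sided inverse. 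Since $\HHH$ acts by $H$-valued gauge transformations it commutes with $\tau$, making the map $\HHH$-equivariant; both the map and its inverse are continuous, so once the equations are matched below I obtain a homeomorphism of orbit spaces.

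Next I would match the equations. Since $\dim_{\C} X = 1$, both $[\varphi, \varphi]$ (of type $(2,0)$) and $[\tau(\varphi), \tau(\varphi)]$ (of type $(0,2)$) vanish, and the graded-commutativity of the wedge-bracket on 1-forms gives $[\tau(\varphi), \varphi] = [\varphi, \tau(\varphi)]$, so
$$
\tfrac{1}{2}[\psi, \psi] \,=\, -[\varphi, \tau(\varphi)],
$$
identifying the first equation of (\ref{harmonicity}) with the first equation of (\ref{hitchin}). For the remaining two equations, type considerations and the commutation of $\tau$ with the $H$-connection $d_A$ give
$$
d_A \psi \,=\, \dbar_A \psi^{1,0} + \partial_A \psi^{0,1} \,=\, \dbar_A \varphi - \partial_A \tau(\varphi) \,=\, \dbar_A \varphi - \tau(\dbar_A \varphi).
$$
The Hodge star on a Riemann surface acts by $-i$ on $(1,0)$-forms and by $+i$ on $(0,1)$-forms, so $\ast \psi = -i(\varphi + \tau(\varphi))$, and an analogous calculation yields
$$
d_A^\ast \psi \,=\, -\ast d_A \ast \psi \,=\, i \ast \bigl( \dbar_A \varphi + \tau(\dbar_A \varphi) \bigr).
$$
The equations $d_A \psi = 0$ and $d_A^\ast \psi = 0$ are therefore equivalent, respectively, to the vanishing of the $\tau$-anti-invariant and $\tau$-invariant parts of the $(1,1)$-form $\dbar_A \varphi$, and their conjunction is exactly $\dbar_A \varphi = 0$.

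The main obstacle is keeping the bookkeeping consistent: one must justify that $\tau$ commutes with $d_A$ (this is where the $H$-structure is essential, since an arbitrary $G$-connection would not preserve the compact conjugation) and apply the Hodge-type vanishings available on the one-complex-dimensional surface in the right places. Once those are in hand, everything reduces to the formal identities above, and the explicit inverse together with continuity of both maps gives the desired homeomorphism of orbit spaces.
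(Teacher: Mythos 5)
Your proof is correct, and it is exactly the verification the paper leaves implicit: the paper states this as a ``simple result'' and only later records that the homeomorphism is induced by the affine map $(d_A,\varphi)\mapsto d_A+\varphi-\tau(\varphi)$, which is precisely your correspondence. Your term-by-term matching --- $\tfrac12[\psi,\psi]=-[\varphi,\tau(\varphi)]$ from the vanishing of $(2,0)$ and $(0,2)$ forms, and the splitting of $\dbar_A\varphi=0$ into $d_A\psi=0$ and $d_A^\ast\psi=0$ via the $\tau$-invariant/anti-invariant parts, together with $\HHH$-equivariance and continuity of both directions --- is the standard argument and fills in the omitted details faithfully.
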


\subsection{The moduli space as a hyper-K\"ahler quotient}

We will see now that the moduli space $\cM(G)$ 
has a hyper-K\"ahler structure. For this, recall first that
a hyper-K\"ahler manifold is a differentiable manifold $M$ equipped with a 
Riemannian metric $g$ and complex structures $J_i$, $i=1,2,3$ 
satisfying the quaternion relations $J_i^2=-I$, 
$J_3=J_1J_2\,=\, -J_2J_1$, $J_2\,=\, -J_1J_3\,=\, J_3J_1$ and $J_1\,=\, J_2J_3
\,=\, -J_3J_2$ such that if we define $\omega_i(\cdot, \cdot)\,=\,g(J_i\cdot, \cdot)$, then
$(g,J_i,\omega_i)$ is a K\"ahler structure on $M$.
Let $\Omega_i$ denote the holomorphic symplectic structure on $\cM(G)$ with respect
to the complex structure $J_i$. In fact, $\Omega_1\,=\, \omega_2+\sqrt{-1}\omega_3$,
$\Omega_2\,=\, \omega_3+\sqrt{-1}\omega_1$ and $\Omega_3\,=\, \omega_1+\sqrt{-1}\omega_2$.

One way to understand the non-abelian Hodge theory correspondence 
mentioned above
is through the analysis of the hyper-K\"ahler 
structure  of the moduli spaces  involved.  We explain how these can 
be obtained as hyper-K\"ahler quotients. For this, 
let $\bE_G$ be a smooth principal $G$-bundle over $X$, and let $\bE_H$ be a fixed
reduction of $\bE_G$ to the maximal compact subgroup $H$.  
The set $\AAA$  of $H$-connections on $\bE_H$ is an  affine space modelled 
on  $\Omega^1(X,\bE(\lieh))$. Via the Chern correspondence $\AAA$ is in
one-to-one correspondence  with the set $\CCC$ of holomorphic structures on 
$\bE_{G}$ \cite[pp. 191--192, Proposition 5]{At}, which is an affine space modelled on
$\Omega^{0,1}(X,\bE_{G}(\lieg))$. Let us denote 
$\Omega=\Omega^{1,0}(X,\bE_G(\lieg))$. 
We  consider $\XXX=\AAA\times \Omega$. 
Via the identification $\AAA\cong\CCC$,
we have for $\alpha\in \Omega^{0,1}(X,\bE_{G}(\lieg))$ and 
$\psi\in\Omega^{1,0}(X,\bE_G(\lieg))$ the following three
complex structures
on $\XXX$:
$$
\begin{array}{ccc}
 J_1(\alpha,\psi)& = & (\sqrt{-1}\alpha,\sqrt{-1}\psi)\\
 J_2(\alpha,\psi)& = & (-\sqrt{-1}\tau(\psi),\sqrt{-1}\tau(\alpha))\\
 J_3 (\alpha,\psi)& = & (\tau(\psi),-\tau(\alpha)),
\end{array}
$$
where $\tau$ is the conjugation on $\lieg$ defining its
compact form $\lieh$ (determined fiber-wise by the reduction to
$\bE_H$), combined with the complex conjugation on complex $1$-forms. 

One has also a Riemannian metric $g$ defined on $\XXX$:\, 
for $\alpha\in \Omega^{0,1}(X,\bE_G(\lieg))$ and $\psi \in 
\Omega^{1,0}(X,\bE_G(\lieg))$,
$$
g((\alpha,\psi),(\alpha,\psi))\,=\,-2\sqrt{-1}\int_X B(\tau(\alpha), \alpha)
+
B(\psi, \tau(\psi))\, ,
$$
where $B$ is the Killing form.

Clearly, $J_i$, $i=1,2,3$, satisfy the quaternion relations, and define
a hyper-K\"ahler structure on $\XXX$, with K\"ahler forms
$\omega_i(\cdot,\cdot)=g(J_i\cdot,\cdot)$, $i=1,2,3$.
As shown in \cite{hitchin:1987}, the action of the gauge group $\HHH$ on 
$\XXX$ preserves the 
hyper-K\"ahler structure and there are  moment maps
given by
$$
\mu_1(A,\varphi)=F_A - [\varphi,\tau(\varphi)],\;\;\;
\mu_2(A,\varphi)=\Real(\dbar_E\varphi),\;\;\;
\mu_3(A,\varphi)=\Imaginary(\dbar_E\varphi).
$$
We have that ${\bmu}^{-1}({0})/\HHH$, where ${\bmu}=(\mu_1,\mu_2,\mu_3)$
is the moduli space of solutions to the Hitchin equations (\ref{hitchin}).
In particular, if we consider the irreducible solutions
(equivalently, smooth)
${\bmu}_*^{-1}({0})$ we have that
$$
{\bmu}_*^{-1}({0})/\HHH$$
is a hyper-K\"ahler manifold which, by 
Theorem \ref{hk},  is homeomorphic to the subvariety of smooth
points in  moduli space
$\cM(G)$ of stable $G$-Higgs bundles with the  topological
class of $\bE_G$.

Let us now see how the moduli of harmonic flat connections 
on $\bE_H$ can be realized as a hyper-K\"ahler\ quotient. 
Let $\DDD$ be the set of $G$-connections on $\bE_G$. 
This is an affine space modelled on 
$\Omega^1(X,\bE_G(\lieg))=\Omega^0(X,T^\ast X\otimes_\R\bE_{G}(\lieg))$.
The space $\DDD$ has a complex structure $I_1\,=\,1\otimes\sqrt{-1}$,
which comes from the complex structure of the
bundle. Using the complex structure of $X$ we have also
the complex structure $I_2\,=\,\sqrt{-1}\otimes \tau$.
We can finally consider the complex structure $I_3\,=\,I_1 I_2$.

The reduction to $H$ of the $G$-bundle $\bE_G$ together with a 
Riemannian metric in the conformal class of $X$ defines a flat
Riemannian metric $g_\DDD$ on $\DDD$ which is \kahler\ for the above 
three complex structures.
Hence $(\DDD,g_\DDD,I_1,I_2,I_3)$ is also a hyper-K\"ahler 
manifold. As in the previous case, the action of the gauge group $\HHH$
on $\DDD$ preserves the hyper-K\"ahler
structure  and there are  moment maps
$$
\mu_1(D)=d_A^\ast\psi,\;\;\;
\mu_2(D)=\im(F_D),\;\;\;
\mu_3(D)=\Real(F_D),
$$
where $D= d_A  +\psi$ is the decomposition of $D$ 
defined by 
$$
\bE_G(\lieg)=\bE_H(\lieh)\oplus \bE_H(\sqrt{-1}\lieh).
$$

Hence the moduli space of solutions to the harmonicity 
equations (\ref{harmonicity}) is the hyper-K\"ahler quotient defined by 
$$
{\bmu}^{-1}(0)/\HHH,
$$
where ${\bmu}=(\mu_1,\mu_2,\mu_3)$.
The homeomorphism  between the moduli spaces of solutions to the Hitchin
and the harmonicity equations 
is induced from the affine map 
$$
\begin{array}{ccc}
\AAA\times \Omega & \lra &\DDD\\
(d_A,\varphi)&\longmapsto & d_A  + \varphi -\tau(\varphi).
\end{array}
$$
One can see easily, for example, that this map sends
$\AAA\times\Omega$ with complex structure $J_2$ to $\DDD$ with complex
structure $I_1$ (see \cite{hitchin:1987}).

Now, Theorems  \ref{hk}  and \ref{corlette} 
can be regarded as existence theorems, establishing the non-emptiness
of the hyper-K\"ahler quotient, obtained by focusing on different
complex structures. For Theorem \ref{hk} one gives a special
status to the complex 
structure $J_1$. Combining the symplectic forms determined by  $J_2$
and $J_3$ one has  the $J_1$-holomorphic symplectic form
$\omega_c=\omega_2 +\sqrt{-1}\omega_3$ on $\AAA\times\Omega$. The
gauge group $\GGG=\HHH^\C$ acts on $\AAA\times \Omega$ preserving $\omega_c$.
The symplectic quotient construction can also be extended to the
holomorphic situation (see e.g. \cite{kobayashi:1987}) to obtain
the holomorphic symplectic quotient 
$\{(\dbar_E,\varphi)\;\;:\;\;\dbar_E\varphi=0\}/\GGG $.
What Theorem \ref{hk} says is that for a class 
$[(\dbar_E,\varphi)]$ in this quotient
to have a representative (unique up to $H$-gauge) satisfying
$\mu_1=0$ it is necessary  and sufficient that the pair 
$(\dbar_E, \varphi)$ be polystable. 
This identifies the hyper-K\"ahler quotient to 
the set of equivalence classes of polystable $G$-Higgs bundles on $\bE_G$.
If one now takes $J_2$ on $\AAA\times \Omega$ or equivalently 
$\DDD$ with $I_1$ and argues in a similar way, one gets Theorem 
\ref{corlette} identifying the  hyper-K\"ahler quotient to 
the set of equivalence classes of reductive  flat
connections on $\bE_G$.

\section{Real $G$-Higgs bundles}\label{g-theta-higgs}

\subsection{Involutions and conjugations of complex Lie groups}
\label{realforms-group}

Let $G$ be a Lie group. We define 
$$
\Int(G):=\{f\in\Aut(G)\;\;|\;\; f(h)=ghg^{-1},\;\; \mbox{for every $h\in
  G$}\}.
$$
We have that $\Int(G)=\Ad(G)$.

We define the group of outer automorphisms of $G$ as
$$
\Out(G):=\Aut(G)/\Int(G).
$$

We have a sequence
\begin{equation}\label{outer-extension-group}
1 \lra \Int(G)\lra \Aut(G) \lra \Out(G)\lra 1.
\end{equation}

It is well-known that if $G$ is a connected complex reductive group then the extension 
(\ref{outer-extension-group}) splits (see \cite{de-siebenthal}).

Let $G$ be a complex Lie group and let $G_\R$ be the underlying
real Lie group. We will say that a  real Lie subgroup $G_0\subset G_\R$ 
is a {\bf real form} of $G$ if it is the fixed point set of a 
conjugation (anti-holomorphic involution)  $\sigma$ of $G$. 

Now, let $G$ be simple. A compact real form always exists. 
This follows from the fact that for a simple group there  is a maximal 
compact subgroup $U\subset G$, such that $U^\C=G$. From this we can define
a conjugation $\tau\,:\, G\,\too\, G$ such that $G^\tau=U$.
Let $\Conj(G)$ be the set of conjugations (i.e., anti-holomorphic involutions)
of $G$. We can define the following  equivalence
relations in $\Conj(G)$:
$$
\sigma \sim \sigma'  \;\;\mbox{if there is}\;\; \alpha \in
\Int(G)\;\; \mbox{such that}\;\;  \sigma'\,=\,\alpha\sigma
\alpha^{-1},
$$

We can define a similar relation $\sim$ in the set $\Aut_2(G)$ of automorphisms of
$G$  of order $2$. 

\begin{remark}
The equivalence relation $\sim$  for elements in $\Aut_2(G)$ should not be
confused with the inner equivalence, meaning the equivalence relation where
two elements are equivalent if they map to the same element in $\Out(G)$. It is easy to
show that if $\theta \,\sim\, \theta'$ then they are inner equivalent.
\end{remark}

Cartan \cite{cartan} shows that there is a bijection  
$$
\mbox{Conj}(G)/\sim \,  \longleftrightarrow\,  \Aut_2(G) /\sim\ .
$$ 
More concretely, one has that given the compact conjugation $\tau$, in each
class $\mbox{Conj}(G)/\sim$ one can find a representative $\sigma$ commuting 
with $\tau$ so that $\theta:=\sigma\tau$ is an element of $\Aut_2(G)$, and
similarly if we start with a class in $\Aut_2(G) /\sim\ $.

\subsection{Pseudo-real principal $G$-bundles}

We use the notation of Section \ref{realforms-group}.
Let $G$ be a semisimple complex affine algebraic group. Let $\tau\in \Conj(G)$
be a compact conjugation of $G$, and let
$\sigma\,\in\,\mbox{Conj}(G)$ commuting with $\tau$,  and $\theta=\sigma\tau\in
\Aut_2(G)$.

Let $Z^\sigma\,\subset\, Z$ be the fixed point locus in the center $Z\,
\subset G$.  The subgroup of $Z^\sigma$ generated by its elements
of order two will be denoted by $Z^\sigma_2$.

Let $X$ be a compact connected Riemann surface, of genus $g\, \geq\, 2$, equipped with
an anti-holomorphic involution $\alpha:X\lra X$.

\begin{definition}\label{definition-pseudo-real}
Let $E$ be a holomorphic principal $G$-bundle over $X$. Take any
$c\, \in\, Z^\sigma_2$. We say that $E$ is $(\alpha,\sigma,c)$-{\bf pseudo-real} if
$E$ is equipped with
an anti-holomorphic map $\widetilde{\alpha}:E\lra E$  covering $\alpha$ such that 
\begin{itemize}
\item 
$\widetilde{\alpha}(eg)=\tilde{\alpha}(e)\sigma(g),\;\;\mbox{for}\;\; e\in E\;\;
\mbox{ and }\;\; g \in G$.

\item $\widetilde{\alpha}^2(e)\,=\,ec$.
\end{itemize}

If $c=1$, we say that $E$ is $(\alpha,\sigma)$-{\bf real}.
\end{definition}

\begin{remark}\label{order-2}
An alternative definition of pseudo-real bundle allows for $c$ to be any
element of $Z$. However we can modify $\widetilde{\alpha}$ by the action of an
element $a\in Z$ defining a covering map
$\widetilde{\alpha}':=\widetilde{\alpha}.a$. By this, the element $c$ gets
modified by $c'\,=\,a\sigma(a)c$. In particular we can take $a$ lying in
$Z^\sigma$ and the composition is modified by $a^2$. Therefore if $c$ lies in
$(Z^\sigma)^2$, or more generally is of the form $\sigma(a)a$ we can normalize
our pseudo-real structure to a real one.  But since the natural homomorphism
$Z^\sigma_2\,\longrightarrow\, Z^\sigma/(Z^\sigma)^2$ is surjective we can always
assume that $c$ is of order $2$, as we have done in our definition.
\end{remark}

\begin{remark}
Sometimes to emphasize the pseudo-real structure we will write
$(E,\varphi,\widetilde{\alpha})$ for a $G$-Higgs bundle $(E,\varphi)$ 
equipped with a pseudo-real structure $\widetilde{\alpha}$.
\end{remark}
Define the quotient
$$
G_c\, :=\, G/\langle c\rangle \, .
$$
Note that $\langle c\rangle\,=\, {\mathbb Z}/2\mathbb Z$ if $c\,\not=\, 1$.
Since $c$ is fixed by $\sigma$, the involution $\sigma$ induced an
anti-holomorphic involution of $G_c$. This anti-holomorphic involution of $G_c$
will be denoted by $\sigma'$.
Let $(E_G\, , \widetilde{\alpha})$ be a $(\alpha,\sigma,c)$-pseudo-real principal
$G$-bundle on $X$. Define $E_{G_c}\,:=\, E_G/\langle c\rangle$. Note that
$E_{G_c}$ is the principal $G_c$-bundle obtained
by extending the structure group of $E_G$ using the quotient homomorphism
$G\,\longrightarrow\, G_c$. The above self-map $\widetilde{\alpha}$ of $E_G$
descends to a self-map
$$
\widetilde{\alpha}'\, :\, E_{G_c}\, \longrightarrow\, E_{G_c}\, .
$$
Since $\widetilde{\alpha}^2\,=\,c$, we have $\widetilde{\alpha}'
\circ \widetilde{\alpha}'\,=\, \text{Id}_{E_{G_c}}$.
Therefore, $(E_{G_c}\, , \widetilde{\alpha}')$ is a $(\alpha,\sigma')$-real principal
$G_c$-bundle.

The pair $(X\, , \alpha)$ defines a geometrically irreducible smooth projective curve
defined over $\mathbb R$. This curve defined over $\mathbb R$ will be denoted by
$X'$. Assume that $c\,\not=\, 1$. Let $G'$ (respectively, $G'_c$) be the algebraic
group, defined over $\mathbb R$, given by the pair $(G\, ,\sigma)$ (respectively,
$(G_c\, ,\sigma')$). Consider the short exact sequence of sheaves
$$
1\,\longrightarrow\, \langle c\rangle \,=\, {\mathbb Z}/2{\mathbb Z}
\,\longrightarrow\, G' \,\longrightarrow\, G'_c \,\longrightarrow\, 1
$$
on $X'$. Let
$$
H^1_{\rm et}(X',\, G')\,\longrightarrow\, H^1_{\rm et}(X',\, G'_c)
\,\stackrel{\beta}{\longrightarrow}\, H^2_{\rm et}(X',\, {\mathbb Z}/2{\mathbb Z}) 
$$
be the long exact sequence of \'etale cohomologies corresponding to the above
short exact sequence of sheaves on the curve $X'$ defined over $\mathbb R$. As noted
above, a $(\alpha,\sigma,c)$-pseudo-real principal $G$-bundle on $X$ gives
a $(\alpha,\sigma')$-real principal $G_c$-bundle. Note that the isomorphism classes of
principal $G'_c$-bundles on $X'$ are parametrized by the elements of the
cohomology $H^1_{\rm et}(X',\, G'_c)$. Indeed, this follows immediately from the
fact that any principal $G'_c$-bundle on $X'$ can be locally trivialized with
respect to the \'etale topology. Therefore, a $(\alpha,\sigma,c)$-pseudo-real
principal $G$-bundle on $X$ gives an element of $H^1_{\rm et}(X',\, G'_c)$.

We will give a necessary and sufficient condition for a
given $(\alpha,\sigma')$-real principal $G_c$-bundle on $X$ to come from a
$(\alpha,\sigma,c)$-pseudo-real principal $G$-bundle.

Let $(E_{G_c}\, , \widetilde{\alpha}')$ be a $(\alpha,\sigma')$-real principal
$G_c$-bundle on $X$. As explained above, $(E_{G_c}\, , \widetilde{\alpha}')$ is
equivalently a principal $G'_c$-bundle on $X'$. This principal $G'_c$-bundles on $X'$
will be denoted by $F_{G_c}$. Consider the adjoint action of $G$ on itself. Since
$c$ lies in the center of $G$,
this action of $G$ factors through the quotient group $G_c$. Let
$$
E_{G_c}(G)\, :=\, E_{G_c}\times^{G_c} G\, \longrightarrow\, X
$$
be the fiber bundle associated to the principal $G_c$-bundle $E_{G_c}$ for this
action of $G_c$ on $G$. Since the action of $G_c$ on $G$ preserves the group
structure on $G$, each fiber of $E_{G_c}(G)$ is a group isomorphic to $G$.
The action of $G_c$ on $G$ descends to an action of $G_c$ on
the quotient $G/\langle c\rangle\,=\, G_c$, and this descended action
coincides with the adjoint action of $G_c$ on itself. Therefore, the
short exact sequence of groups
$$
1\, \longrightarrow\,{\mathbb Z}/2{\mathbb Z}
\,\longrightarrow\, G \,\longrightarrow\, G_c \,\longrightarrow\, 1
$$
produces a short exact sequence of fiber bundles with group structures
\begin{equation}\label{h1}
1\, \longrightarrow\,X\times ({\mathbb Z}/2{\mathbb Z}) \,\longrightarrow\,
E_{G_c}(G) \,\longrightarrow\, \text{Ad}(E_{G_c}) \,\longrightarrow\, 1 \, ,
\end{equation}
where $\text{Ad}(E_{G_c})\,=\, E_{G_c}\times^{G_c} G_c$ is the adjoint bundle
for $E_{G_c}$.

The involution $\widetilde{\alpha}'$ of $E_{G_c}$ and the involution $\sigma$ of
$G$ together produce an anti-holomorphic involution of $E_{G_c}(G)$ covering
$\alpha$. Similarly, $\widetilde{\alpha}'$ and $\sigma'$
together produce an anti-holomorphic involution of $\text{Ad}(E_{G_c})$ covering
$\alpha$. Therefore, \eqref{h1} produces a short exact sequence
\begin{equation}\label{h11}
1\, \longrightarrow\,X'\times ({\mathbb Z}/2{\mathbb Z}) \,\longrightarrow\,
E_{G_c}(G)' \,\longrightarrow\, \text{Ad}(E_{G_c})' \,\longrightarrow\, 1
\end{equation}
over the curve $X'$ defined over $\mathbb R$. We note that $\text{Ad}(E_{G_c})'$
is the adjoint bundle for the principal $G'_c$-bundle $F_{G_c}$ over $X'$ defined by
the pair $(E_{G_c}\, , \widetilde{\alpha}')$.

The space of all isomorphism classes of principal $G'_c$-bundles on $X'$
are parametrized by $H^1_{\rm et}(X',\, \text{Ad}(E_{G_c})')$. This identification
is constructed as follows. First recall that
$\text{Ad}(E_{G_c})'$ is the adjoint bundle for the principal $G'_c$-bundle
$F_{G_c}$ over $X'$. Given a principal
$G'_c$-bundle on $X'$, by choosing \'etale local isomorphisms of it with
$F_{G_c}$ we get an element of $H^1_{\rm et}(X',\, \text{Ad}(E_{G_c})')$. Conversely,
given a $1$--cocycle on $X'$ with values in $\text{Ad}(E_{G_c})'$, by gluing back,
using the cocycle, the restrictions of $F_{G_c}$ to the open
subsets for the cocycle, we get a principal $G'_c$-bundle on $X'$.
Note that if $F_{G_c}$ is the trivial principal $G'_c$-bundle, then
$H^1_{\rm et}(X',\, \text{Ad}(E_{G_c})')\,=\, H^1_{\rm et}(X',\, G'_c)$.

The set $H^1_{\rm et}(X',\, \text{Ad}(E_{G_c})')$ has a distinguished base point $t_0$.
This point $t_0$ corresponds to the isomorphism class of the
principal $G'_c$-bundle $F_{G_c}$.

Consider the short exact sequence of \'etale cohomologies
\begin{equation}\label{h2}
H^1_{\rm et}(X',\, E_{G_c}(G)')\,\stackrel{\gamma'}{\longrightarrow}\, H^1_{\rm et}(X',
\,  \text{Ad}(E_{G_c})')\,\stackrel{\beta'}{\longrightarrow}\,
H^2_{\rm et}(X',\, {\mathbb Z}/2{\mathbb Z})\,=\, {\mathbb Z}/2{\mathbb Z}
\end{equation}
associated to \eqref{h11}. It can be shown that $(E_{G_c}\, , \widetilde{\alpha}')$
is given by a $(\alpha,\sigma,c)$-pseudo-real principal $G$-bundle if and only if the
base point $t_0\, \in\, H^1_{\rm et}(X', \,  \text{Ad}(E_{G_c})')$ lies in the
image of the map $\gamma'$ in \eqref{h2}. Indeed, if $(E_G\, , \widetilde{\alpha})$ is
a $(\alpha,\sigma,c)$-pseudo-real principal $G$-bundle on $X$ that gives
$(E_{G_c}\, , \widetilde{\alpha}')$, then the adjoint bundle
$\text{Ad}(E_G)$ equipped with the involution constructed using $\widetilde{\alpha}$ and
$\sigma$ produces an element $t'\, \in\, H^1_{\rm et}(X',\, E_{G_c}(G)')$ such that
$\gamma'(t')\,=\, t_0$. Conversely, any $t'\, \in\, H^1_{\rm et}(X',\, E_{G_c}(G)')$
produces a $(\alpha,\sigma,c)$-pseudo-real principal $G$-bundle. If $\gamma'(t')
\,=\, t_0$, then this $(\alpha,\sigma,c)$-pseudo-real principal $G$-bundle gives
the pair $(E_{G_c}\, , \widetilde{\alpha}')$.

Therefore, we have the following.

\begin{proposition}\label{prl}
A $(\alpha,\sigma')$-real principal $G_c$-bundle $(E_{G_c}\, , \widetilde{\alpha}')$ on
$X$ comes from a $(\alpha,\sigma,c)$-pseudo-real principal $G$-bundle if and only
if $\beta'(t_0)\,=\, 0$, where $\beta'$ is the map in \eqref{h2} and
$t_0\, \in\, H^1_{\rm et}(X',\, {\rm Ad}(E_{G_c})')$ is the base point.
\end{proposition}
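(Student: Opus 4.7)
The plan is to reduce the existence of a pseudo-real lift to a standard lifting problem for torsors on the real curve $X'$, and then read off the obstruction from the long exact sequence \eqref{h2}. The paragraph preceding the proposition already sets up all the machinery needed; my task is to organize it into a clean ``if and only if'' argument based on exactness of non-abelian \'etale cohomology.

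First, I would make explicit the dictionary: a $(\alpha,\sigma')$-real principal $G_c$-bundle $(E_{G_c},\widetilde{\alpha}')$ on $X$ is the same thing as a principal $G'_c$-bundle $F_{G_c}$ on the real curve $X'$ associated to $(X,\alpha)$, and similarly a $(\alpha,\sigma,c)$-pseudo-real principal $G$-bundle on $X$ corresponds to a torsor on $X'$ under the \'etale group sheaf sitting in the middle of \eqref{h11}. More precisely, a pseudo-real lift $(E_G,\widetilde{\alpha})$ of $(E_{G_c},\widetilde{\alpha}')$ produces, via its adjoint bundle twisted by $(\widetilde{\alpha},\sigma)$, a class $t'\in H^1_{\rm et}(X',\,E_{G_c}(G)')$ whose image under $\gamma'$ is the distinguished point $t_0\in H^1_{\rm et}(X',\,{\rm Ad}(E_{G_c})')$ corresponding to $F_{G_c}$ itself. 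Conversely, any class $t'\in H^1_{\rm et}(X',\,E_{G_c}(G)')$ can be ``untwisted'' to build a pseudo-real principal $G$-bundle on $X$, and the condition $\gamma'(t')=t_0$ precisely guarantees that extending its structure ``group sheaf'' to ${\rm Ad}(E_{G_c})'$ recovers $(E_{G_c},\widetilde{\alpha}')$.

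With this correspondence in hand, the proposition is a direct application of the exactness of \eqref{h2}. Indeed, by exactness of the pointed sequence of non-abelian \'etale cohomology at $H^1_{\rm et}(X',\,{\rm Ad}(E_{G_c})')$, the element $t_0$ lies in the image of $\gamma'$ if and only if $\beta'(t_0)=0$. The forward direction of the proposition follows from the first half of the dictionary (a pseudo-real lift yields $t'$ with $\gamma'(t')=t_0$, forcing $\beta'(t_0)=0$); the reverse direction follows from the second half (any $t'$ with $\gamma'(t')=t_0$ builds a pseudo-real bundle mapping to $(E_{G_c},\widetilde{\alpha}')$).

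The main technical point I would expect to dwell on is the precise interpretation of pseudo-real principal $G$-bundles as $E_{G_c}(G)'$-torsors on $X'$. One has to check that the sheaf of groups whose torsors on $X'$ parametrize pseudo-real lifts of the fixed $F_{G_c}$ is indeed the twisted form $E_{G_c}(G)'$ appearing in \eqref{h11}, and that this identification is compatible with the extension-of-structure-group map $\gamma'$. Once this is verified (which is essentially a twisting exercise using that $c$ is central and fixed by $\sigma$, so the adjoint $G_c$-action on $G$ and on $G_c$ fit into the equivariant short exact sequence underlying \eqref{h11}), the rest of the argument is pure diagram-chase through the pointed exact sequence \eqref{h2}.
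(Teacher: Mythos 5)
Your argument is correct and is essentially the paper's own: the paper's justification (the paragraph preceding the proposition) makes exactly the same dictionary between pseudo-real lifts of $(E_{G_c},\widetilde{\alpha}')$ and classes $t'\in H^1_{\rm et}(X',\,E_{G_c}(G)')$ with $\gamma'(t')=t_0$, and then invokes exactness of the pointed sequence \eqref{h2} to convert ``$t_0\in\operatorname{im}\gamma'$'' into ``$\beta'(t_0)=0$''. The technical point you flag (that pseudo-real lifts are parametrized by torsors under the twisted group sheaf $E_{G_c}(G)'$, compatibly with $\gamma'$) is precisely the step the paper leaves as ``it can be shown'', so your write-up, if anything, is more explicit about where the work lies.
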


The following proposition shows the relation between the reality conditions 
defined by conjugations of $G$ that are inner equivalent. One has the 
following.

\begin{proposition}\label{almost-inner}
Let $\sigma,\sigma'\in \Conj(G)$ such that $\sigma'=\Int(g_0)\sigma$ for some
$g_0\in G$, i.e., $\sigma'(g)\,=\,g_0\sigma(g)g_0^{-1}$. Let $E$ be a $G$-bundle
over $X$. Then $E$ is $(\alpha,\sigma,c)$-pseudo-real if and only if it is
$(\alpha,\sigma',c')$-pseudo-real, where $c$ and $c'$ are related by $g_0$ and
$\sigma$. In fact $c'=c$, if $\sigma(g_0)=g_o^{-1}$.
\end{proposition}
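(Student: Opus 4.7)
The plan is to produce the new pseudo-real structure by twisting the given one by $g_0^{-1}$, and then to normalize using Remark~\ref{order-2}. Concretely, given a $(\alpha,\sigma,c)$-pseudo-real structure $\widetilde{\alpha}\colon E \to E$, I would set $\widetilde{\alpha}'(e) := \widetilde{\alpha}(e)g_0^{-1}$. The $\sigma'$-equivariance is a direct check:
$$
\widetilde{\alpha}'(eg) \,=\, \widetilde{\alpha}(e)\sigma(g)g_0^{-1} \,=\, \widetilde{\alpha}(e)g_0^{-1}\bigl(g_0\sigma(g)g_0^{-1}\bigr) \,=\, \widetilde{\alpha}'(e)\sigma'(g).
$$

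Next, I would compute the square using $\widetilde{\alpha}^2(e) = ec$, obtaining
$$
(\widetilde{\alpha}')^2(e) \,=\, \widetilde{\alpha}\bigl(\widetilde{\alpha}(e)g_0^{-1}\bigr)g_0^{-1} \,=\, \widetilde{\alpha}^2(e)\sigma(g_0)^{-1}g_0^{-1} \,=\, e\cdot c\bigl(g_0\sigma(g_0)\bigr)^{-1}.
$$
A preliminary observation is that the requirement $(\sigma')^2 = \Id$ forces $g_0\sigma(g_0)$ to lie in the centre $Z$; since $\sigma$ and $\sigma'$ coincide on $Z$, this element is moreover fixed by both. Hence the right-hand side is the action on $e$ of a well-defined central element $c' := c\bigl(g_0\sigma(g_0)\bigr)^{-1} \in Z^\sigma$, which is the promised relation between $c$ and $c'$ via $g_0$ and $\sigma$.

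To land inside $Z^{\sigma'}_2$ as required by Definition~\ref{definition-pseudo-real}, I would invoke Remark~\ref{order-2}: modifying $\widetilde{\alpha}'$ by multiplication with $a \in Z^\sigma$ changes $c'$ to $c'\cdot a\sigma(a)$, and since the natural map $Z^\sigma_2 \to Z^\sigma/(Z^\sigma)^2$ is surjective, one can always renormalize so that $c'$ becomes an element of order dividing two. The converse direction is symmetric: since $\sigma = \Int(g_0^{-1})\sigma'$, the same construction applied with $g_0^{-1}$ in place of $g_0$ turns a pseudo-real structure for $\sigma'$ back into one for $\sigma$, and the two constructions are mutually inverse up to the central normalization.

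Finally, in the particular case $\sigma(g_0) = g_0^{-1}$, we have $g_0\sigma(g_0) = 1$, so the formula of the second step yields $c' = c$ without any normalization. The main bookkeeping subtlety is the third step: tracking the ambiguity $c' \sim c' \cdot a\sigma(a)$ so as to land in $Z^{\sigma'}_2$ exactly; this is entirely the content of Remark~\ref{order-2} and presents no conceptual difficulty.
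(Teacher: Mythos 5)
Your proposal is correct and follows essentially the same route as the paper: twist the structure by $g_0^{-1}$, verify $\sigma'$-equivariance, compute $(\widetilde{\alpha}')^2(e)=ec\,\sigma(g_0^{-1})g_0^{-1}$ with $g_0\sigma(g_0)$ central by $(\sigma')^2=\Id$, and normalize via Remark~\ref{order-2}, with the case $\sigma(g_0)=g_0^{-1}$ giving $c'=c$ directly. The only difference is cosmetic: you spell out that the central element is $\sigma$-fixed and that the converse follows by symmetry with $g_0^{-1}$, points the paper leaves implicit.
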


\begin{proof}
Let $(E\, ,\widetilde{\alpha})$ be a $(\alpha,\sigma,c)$-pseudo-real principal
$G$-bundle on $X$. Define
$$
\widetilde{\alpha}'\, :\, E\, \longrightarrow\, E\, , ~\ e\, \longmapsto\,
\widetilde{\alpha}(e)g_0^{-1}\, .
$$
Since $\widetilde{\alpha}$ is anti-holomorphic and covers $\alpha$, the map
$\widetilde{\alpha}'$ is also anti-holomorphic and covers $\alpha$. 
For any $e\,\in\, E$ and $g\, \in\, G$, we have
$$
\widetilde{\alpha}'(eg)\,=\, \widetilde{\alpha}(eg)g_0^{-1}\,=
\,\widetilde{\alpha}(e)\sigma(g)g_0^{-1}
\,=\, \widetilde{\alpha}(e)g_0^{-1}g_0\sigma(g)g_0^{-1}\,=\, 
\widetilde{\alpha}'(e)\sigma'(g)\, .
$$
Also,
$$
\widetilde{\alpha}'(\widetilde{\alpha}'(e))\,=\, \widetilde{\alpha}'(\widetilde{\alpha}(e)g_0^{-1})
\,=\, \widetilde{\alpha}(\widetilde{\alpha}(e)g_0^{-1})g_0^{-1}\,=\,
\widetilde{\alpha}(\widetilde{\alpha}(e))\sigma(g_0^{-1})g_0^{-1}
\, =\, ec\sigma(g_0^{-1})g_0^{-1}.
$$
Now, $\sigma'^2=\Id$ implies that  $\sigma(g_0^{-1})g_0^{-1}\in Z$, 
and we can appeal to Remark \ref{order-2} 
to claim that by  modifying
$\widetilde{\alpha}'$ by an element of the center 
 $c\sigma(g_0^{-1})g_0^{-1}$ is replaced by an element $c'\in Z_2^\sigma$,
and hence $E$ has the structure of a $(\alpha,\alpha',c')$-pseudo-real 
principal $G$-bundle on $X$. The last claim in the proposition is clear.
\end{proof}




\subsection{Pseudo-real $G$-Higgs bundles}

Let $(E\, , \widetilde{\alpha})$ be a $(\alpha,\sigma,c)$-pseudo-real
principal $G$-bundle on $X$ as defined above. Let
$$
\text{ad}(E)\, :=\, E\times^G {\mathfrak g}\,=:\, E({\mathfrak g})
$$
be the adjoint vector bundle for $E$. The self-map $\widetilde{\alpha}$ of $E$
produces an anti-holomorphic self-map
\begin{equation}\label{involution-ad}
\widetilde{\alpha}_0\, :\, E({\mathfrak g})\,\longrightarrow\, E({\mathfrak g})
\end{equation}
such that $q\circ \widetilde{\alpha}_0\,=\, \alpha\circ q$, where $q$ is the
projection of $E({\mathfrak g})$ to $X$.
Since $c\, \in\, Z$, the adjoint action of $c$ on $\mathfrak g$ is trivial.
This immediately implies that $\widetilde{\alpha}_0$ is an involution. In other words,
$(E({\mathfrak g})\, ,\tilde{\alpha}_0)$ is a real vector bundle
(see \cite{BGH}).

The real structure of the canonical line bundle $K$ of $X$ given by $\alpha$
and the above real structure $\widetilde{\alpha}_0$ of $E({\mathfrak g})$
combine to define a real structure on the vector bundle $E({\mathfrak g})\otimes K$.
For notational convenience, this real structure on $E({\mathfrak g})\otimes K$
will also be denoted by $\widetilde{\alpha}$. So
$$
\widetilde{\alpha}\,:\, E(\lieg)\otimes K\,\lra\, E(\lieg)\otimes K
$$
is an anti-holomorphic involution over $\alpha$.

\begin{definition}
Let  $(E,\varphi)$ be a $G$-Higgs bundle. We say that $(E,\varphi)$ is  
$(\alpha,\sigma,c,+)$-{\bf pseudo-real} 
(respectively, $(\alpha,\sigma,c,-)$-{\bf pseudo-real})
if $E$ is $(\alpha,\sigma,c)$-pseudo-real, and $\varphi$ satisfies 
$$
\widetilde{\alpha}(\varphi)\,=\, \varphi~ \  ~{\rm (respectively},~
\widetilde{\alpha}(\varphi)\,=\, -\varphi{\rm )} \, .
$$
\end{definition}

The concept of $(\alpha,\sigma,c,+)$-{\bf pseudo-real} Higgs bundle was
introduced in  \cite{BGH}, where notions of  (semi)stability and polystability 
for these objects were defined. These notions are identical for
the $(\alpha,\sigma,c,-)$-pseudo-real case.
For the benefit of the reader we recall the basic definitions and 
facts (see \cite{BGH} for details).

Let $\Ad(E):=E\times^G G$ be the group-scheme over $X$ associated to $E$ for
the adjoint action of $G$ on it self. The bundle $\Ad(E)$ is equipped with an
anti-holomorphic involution 
\begin{equation}\label{involution-Ad}
\widetilde{\alpha}: \Ad(E)\,\longrightarrow\, \Ad(E)
\end{equation}
 (abusing
notation again) covering $\alpha$. Note that $\widetilde{\alpha}^2\,=\,
\Id_{\Ad(E)}$ since the adjoint action of $Z^\sigma$ on $G$ is trivial.

A {\bf  parabolic subgroup scheme} of $\Ad(E)$ is a Zariski
closed analytically locally trivial subgroup scheme 
$\underline{P}\subset \Ad(E)$ such that $\Ad(E)/\underline{P}$ is compact. 
For such a parabolic
subgroup scheme $\underline{P}$ let $\underline{\liep}\subset \ad(E)$ be the
corresponding bundle of  Lie algebras. 

A  $(\alpha,\sigma,c,\pm)$-{\bf pseudo-real}  $G$-Higgs bundle 
$(E,\varphi,\widetilde{\alpha})$ is {\bf semistable} 
(respectively {\bf stable})
if for every proper parabolic subgroup scheme $\underline{P}\subset \Ad(E)$ 
such that
$\widetilde{\alpha}(\underline{P})\subset \underline{P}$, where 
$\widetilde{\alpha}$ is given by
(\ref{involution-Ad}), and $\varphi\in H^0(X,\underline{\liep}\otimes K)$,
$$
\deg(\underline{\liep})\leq 0\;\;\; (\mbox{respectively},\,\,\, 
\deg(\underline{\liep}) < 0,
$$
where $\underline{\liep}$ is the vector bundle associated to $\underline{P}$ defined above.
  
One has the following (see \cite{BGH}).

\begin{proposition}\label{stability-versus-real-stability}
Let  $(E,\varphi,\widetilde{\alpha})$ 
be a  $(\alpha,\sigma,c,\pm)$-{\bf pseudo-real} $G$-Higgs bundle.

(1) If $(E,\varphi)$ is semistable (respectively stable), in the sense of
Section \ref{moduli}, then $(E,\varphi,\widetilde{\alpha})$ is 
semistable (respectively stable).

(2) If $(E,\varphi,\widetilde{\alpha})$ is  semistable then $(E,\varphi)$ 
is semistable.

(3) If $(E,\varphi,\widetilde{\alpha})$ is  stable then $(E,\varphi)$ 
is polystable (in the sense given in Section \ref{moduli}.
\end{proposition}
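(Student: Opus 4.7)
My plan is to exploit throughout the observation that pseudo-real (semi)stability is exactly the restriction of ordinary (semi)stability to the subcollection of $\widetilde{\alpha}$-invariant parabolic subgroup schemes of $\Ad(E)$, together with the fact that the canonical Harder--Narasimhan and socle/Jordan--H\"older constructions for Higgs bundles are automatically preserved by $\widetilde{\alpha}$ by virtue of their uniqueness.

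For part (1), I would note that ordinary (semi)stability of $(E,\varphi)$ as defined in Section \ref{moduli} is equivalent, after translating reductions $E_P$ to parabolic subgroups $P\subset G$ into the associated parabolic subgroup schemes $\underline{P}\subset \Ad(E)$, to exactly the degree inequality $\deg \underline{\liep}\leq 0$ (respectively, $<0$) required by pseudo-real (semi)stability, but imposed on the larger class of \emph{all} parabolic subgroup schemes. Restricting to the subcollection of $\widetilde{\alpha}$-invariant ones gives (1) immediately.

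For part (2), I would argue by contradiction. Suppose $(E,\varphi)$ is not semistable. The Harder--Narasimhan reduction for principal Higgs bundles produces a canonical parabolic subgroup scheme $\underline{P}_{\mathrm{HN}}\subset \Ad(E)$, uniquely determined by $(E,\varphi)$, with $\varphi\in H^0(X,\underline{\liep}_{\mathrm{HN}}\otimes K)$ and $\deg \underline{\liep}_{\mathrm{HN}}>0$. The conditions defining $\underline{P}_{\mathrm{HN}}$ are unchanged when $\varphi$ is replaced by $-\varphi$ (since $\underline{\liep}_{\mathrm{HN}}$ is a linear subspace fibrewise) and are compatible with transport through the anti-holomorphic involution $\widetilde{\alpha}$, so $\widetilde{\alpha}(\underline{P}_{\mathrm{HN}})$ is again a Harder--Narasimhan reduction of $(E,\varphi)$. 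Uniqueness forces $\widetilde{\alpha}(\underline{P}_{\mathrm{HN}})=\underline{P}_{\mathrm{HN}}$, and this invariant destabilizing parabolic subgroup scheme contradicts pseudo-real semistability.

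For part (3), by (2) the underlying Higgs bundle $(E,\varphi)$ is semistable; suppose it is not polystable. The Jordan--H\"older theory for semistable Higgs bundles yields a canonical proper reduction to a parabolic subgroup scheme $\underline{P}\subset \Ad(E)$ with $\varphi\in H^0(X,\underline{\liep}\otimes K)$ and $\deg \underline{\liep}=0$; concretely, one can take the maximal polystable sub-object of slope equal to that of $(E,\varphi)$, the Higgs analogue of the socle. The same canonicity argument as in (2) makes this reduction $\widetilde{\alpha}$-invariant, contradicting the strict inequality $\deg \underline{\liep}<0$ required by pseudo-real stability. The main obstacle will be the bookkeeping for the principal bundle formalism: one must phrase Harder--Narasimhan and the socle entirely in terms of parabolic subgroup schemes of $\Ad(E)$ and check carefully that each is invariant under the combined operations of anti-holomorphic base change by $\alpha$, fibrewise conjugation by $\sigma$, and the sign change $\varphi\mapsto\pm\varphi$. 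Once these canonical objects are in hand, their invariance under $\widetilde{\alpha}$ is automatic from uniqueness.
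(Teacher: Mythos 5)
The paper itself gives no argument for this proposition beyond the citation of \cite{BGH}, so your proposal has to be judged against the standard argument given there. Your parts (1) and (2) are essentially that argument: (1) is indeed just the observation that, after translating the character-based definition of Section \ref{moduli} into the condition $\deg\underline{\liep}\le 0$ (resp.\ $<0$) for parabolic subgroup schemes compatible with $\varphi$ (a standard but not completely trivial equivalence, worth citing), pseudo-real (semi)stability tests a smaller class of parabolics; and (2) is the usual Harder--Narasimhan uniqueness argument, where one checks that the transport by $\widetilde{\alpha}$ preserves degrees (conjugation and the orientation-reversing pullback each flip the sign, so the composite preserves $\deg$), preserves the condition $\varphi\in H^0(X,\underline{\liep}\otimes K)$ up to the irrelevant sign, and sends parabolic subgroup schemes to parabolic subgroup schemes, so that uniqueness of the HN reduction forces $\widetilde{\alpha}$-invariance. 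That is correct in outline.

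Part (3), however, has a genuine gap exactly at the step you flag as automatic. The Harder--Narasimhan reduction is canonical as a subobject, which is what makes (2) work; but for a semistable, non-polystable Higgs $G$-bundle there is no analogous canonical Jordan--H\"older parabolic reduction: the admissible reduction to a parabolic whose Levi extension is (poly)stable is unique only up to isomorphism/conjugation, which is not enough to conclude $\widetilde{\alpha}(\underline{P})=\underline{P}$. The object that \emph{is} canonical --- the socle, i.e.\ the maximal polystable Higgs subsheaf of the same slope --- is a subsheaf of an associated vector bundle (for a principal $G$-bundle one would take it inside $(\ad(E),\ad\varphi)$), not a parabolic subgroup scheme of $\Ad(E)$, so its $\widetilde{\alpha}_0$-invariance does not by itself contradict pseudo-real stability, which is phrased entirely in terms of invariant parabolic subgroup schemes with $\deg\underline{\liep}<0$. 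Bridging this --- producing from the socle of the adjoint Higgs bundle (or otherwise) a canonical, hence $\widetilde{\alpha}$-invariant, proper parabolic subalgebra bundle of degree $\ge 0$ containing $\varphi$ --- is precisely the nontrivial Lie-theoretic content of part (3) in \cite{BGH}, and your sketch assumes it rather than proves it. As written, the phrase ``the same canonicity argument as in (2)'' does not apply, and the proof of (3) is incomplete.
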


To define polystability for a pseudo-real $G$-Higgs bundle let 
$\underline{\liep}\subset \ad(E)$ be a parabolic subalgebra bundle such that
$\widetilde{\alpha_0}(\underline{\liep})=\underline{\liep}$, 
where $\widetilde{\alpha_0}$ is the 
involution defined in (\ref{involution-ad}).

Let $R_u(\underline{\liep})\subset \underline{\liep}$ be the holomorphic subbundle over $X$ whose
fiber over a point $x\in X$ is the nilpotent radical of the parabolic 
subalgebra $\liep_x$. Therefore, the quotient 
$\underline{\liep}/R_u(\underline{\liep})$ is a 
bundle of reductive Lie algebras. Note that 
$\widetilde{\alpha_0}(R_u(\underline{\liep}))=R_u(\underline{\liep})$. 
A {\bf Levi subalgebra bundle} of
$\underline{\liep}$ is a holomorphic subbundle
$$
\ell({\underline{\liep}})\, \subset\, \underline{\liep}
$$
such that for each $x\, \in\, X$, the fiber
$\ell(\underline{\liep})_x$ is a Lie subalgebra of ${\underline{\liep}}_x$, and the
composition
$$
\ell(\underline{\liep})\, \hookrightarrow\, \underline{\liep}\, \longrightarrow\,
\underline{\liep}/R_u(\underline{\liep})
$$
is an isomorphism, where $\underline{\liep}\, \longrightarrow\,
\underline{\liep}/R_u(\underline{\liep})$ is the quotient map.

A semistable $(\alpha,\sigma,c,\pm)$-pseudo-real $G$-Higgs bundle   $(E,\varphi,\widetilde{\alpha})$ 
is {\bf polystable} if either is stable, or there is a proper parabolic subalgebra bundle 
$\underline{\liep}\,\subsetneq\,
{\ad}(E)$, and a Levi subalgebra bundle $\ell(\underline{\liep})\, \subset\,
\underline{\liep}$, such that $\widetilde{\alpha_0}(\underline{\liep})\, =\, 
\underline{\liep}$,
$\widetilde{\alpha_0}(\ell(\underline{\liep}))\, =\, \ell(\underline{\liep})$, 
$\varphi\in H^0(X,\ell(\underline{\liep})\otimes K)$, and for every parabolic
subalgebra bundle $\underline{\lieq}\subset \ell(\underline{\liep})$ with
$\widetilde{\alpha_0}(\underline{\lieq})\, =\, \underline{\lieq}$ we have 
$$
\deg(\underline{\lieq}) < 0.
$$ 

We have the following (see \cite{BGH}).

\begin{proposition} \label{real-polystability-versus-polystability}
A $(\alpha,\sigma,c,\pm)$-pseudo-real $G$-Higgs bundle   $(E,\varphi,\widetilde{\alpha})$ 
is  polystable if and only if $(E,\varphi)$ is polystable.
\end{proposition}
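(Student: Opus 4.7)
The plan is to reduce both implications to the Hitchin--Kobayashi correspondence (Theorem~\ref{higgs-hk}), by showing that pseudo-real polystability of $(E,\varphi,\widetilde{\alpha})$ is equivalent to the existence of a Hitchin reduction that is compatible with the pseudo-real structure, i.e.\ that is preserved by $\widetilde{\alpha}$. This matches the strategy used in \cite{BGH} for the $+$ case; the essential new point is that, because $\tau(-\varphi)=-\tau(\varphi)$ and $[-\varphi,-\tau(\varphi)]=[\varphi,\tau(\varphi)]$, the Hitchin equation is insensitive to the sign appearing in the reality condition on $\varphi$, so the same machinery applies to the $-$ case verbatim.

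The backward implication is then immediate. If $(E,\varphi,\widetilde{\alpha})$ is pseudo-real polystable, then the $\widetilde{\alpha}$-compatible solution of the Hitchin equation is in particular a solution of the Hitchin equation, so by Theorem~\ref{higgs-hk} the underlying $G$-Higgs bundle $(E,\varphi)$ is polystable in the sense of Section~\ref{moduli}.

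For the forward direction, suppose $(E,\varphi)$ is polystable. Theorem~\ref{higgs-hk} yields an $H$-reduction $h$ of $E$ with $F_h-[\varphi,\tau(\varphi)]=0$. Because $\sigma$ commutes with the compact conjugation $\tau$ (so $\sigma$ preserves $H$), the pullback $\widetilde{\alpha}^{\ast}h$ is again a reduction of $E$ to $H$, and the relation $\widetilde{\alpha}^{\ast}\varphi=\pm\varphi$ together with the sign-invariance noted above shows that $\widetilde{\alpha}^{\ast}h$ also satisfies the Hitchin equation. By uniqueness of the Hitchin reduction up to the action of the $H$-gauge stabilizer $K$ of $(E,\varphi)$, there is $g\in K$ with $\widetilde{\alpha}^{\ast}h=g\cdot h$. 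The map $h\mapsto\widetilde{\alpha}^{\ast}h$ then descends to an involution on the $K$-orbit of $h$, and a fixed point (equivalently, an $\widetilde{\alpha}$-invariant representative) exists by an averaging argument inside the compact group $K$: one replaces $h$ by $k\cdot h$ for a suitable $k$ obtained from a square root of $g$ inside $K$. The resulting $\widetilde{\alpha}$-invariant Hitchin reduction is exactly the datum needed to conclude pseudo-real polystability.

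The main technical obstacle is the averaging step in the strictly polystable case, where $K$ is a nontrivial compact reductive group and one must check the cocycle identity $g\cdot\widetilde{\alpha}^{\ast}(g)=1$ (up to the appropriate $Z^\sigma_2$-twist) that makes $h\mapsto\widetilde{\alpha}^{\ast}h$ an honest involution on the $K$-orbit, so that a fixed point is guaranteed. In the stable case, $K$ reduces to a central subgroup and the square-root construction is essentially automatic. The full argument follows the polystability part of the proof in \cite{BGH} for the $+$ case, and the sign-invariance of the Hitchin equation recorded above lets the same argument cover the $-$ case with no change.
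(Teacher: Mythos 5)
The paper itself does not prove this proposition (it is quoted from \cite{BGH}), so the only question is whether your argument stands on its own, and as written it does not. Both of your implications rest on the unproved equivalence ``$(E,\varphi,\widetilde{\alpha})$ is pseudo-real polystable if and only if there is an $\widetilde{\alpha}$-invariant solution of the Hitchin equation''. Each half of that equivalence is at least as hard as the proposition: the existence half (pseudo-real polystable $\Rightarrow$ invariant solution), which your backward implication needs, is the main analytic theorem of the pseudo-real theory, and in \cite{BGH} its proof is intertwined with precisely the polystability comparisons you are trying to establish, so quoting it here risks circularity; the other half (invariant solution $\Rightarrow$ pseudo-real polystable), which you need to close your forward implication, is never addressed --- pseudo-real polystability is defined purely algebraically, through $\widetilde{\alpha}_0$-invariant parabolic and Levi subalgebra bundles and degree inequalities, and extracting those data from an invariant metric is itself an argument of Hitchin--Kobayashi type, not the one-line conclusion ``exactly the datum needed''.

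The construction of the invariant solution is also not correct as stated. For a strictly polystable $(E,\varphi)$ the Hitchin reduction is unique up to the full automorphism group $\Aut(E,\varphi)$, a positive-dimensional complex reductive group, not up to a compact ``$H$-gauge stabilizer'' $K$; hence the element $g$ with $\widetilde{\alpha}^{*}h=g\cdot h$ need not lie in any compact group, and ``a square root of $g$ inside $K$'' is not available. Even inside a compact group, a square root of $g$ does not by itself solve the twisted cocycle problem that you flag and leave unchecked (and since $\widetilde{\alpha}^{2}$ is the action of the central element $c$, the identity you would need holds only up to the stabilizer of $h$ and a central twist). The standard way to close this step is to note that the set of solutions for fixed $(E,\varphi)$ is the nonpositively curved symmetric space $\Aut(E,\varphi)/K$, on which $h\mapsto\widetilde{\alpha}^{*}h$ acts as an isometric involution (the central $c$ acts trivially on $H$-reductions), and to apply Cartan's fixed point theorem; alternatively one argues algebraically with invariant parabolic and Levi reductions as in \cite{BGH}. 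Your observation that $[-\varphi,\tau(-\varphi)]=[\varphi,\tau(\varphi)]$, so that the ``$-$'' case is treated exactly like the ``$+$'' case, is correct and is indeed why the paper handles both signs uniformly, but it does not repair these gaps.
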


We can thus define the moduli space $\cM(G,\alpha,\sigma,c,\pm)$
of isomorphism classes of polystable $(\alpha,\sigma,c,\pm)$-pseudo-real 
$G$-Higgs bundles, and,  as a consequence of
Proposition \ref{real-polystability-versus-polystability},
define  maps 
\begin{equation}\label{fo1}
\cM(G,\alpha,\sigma,c,\pm)\,\lra\, \cM(G)
\end{equation}
that forget the pseudo-real structure.

\section{Involutions of moduli spaces}

\subsection{Involutions of Higgs bundle moduli spaces}\label{section-involutions}

As before, let $\alpha\,:\,X\,\lra\, X$ and  $\sigma\,:\,G\,\lra\, G$ be anti-holomorphic
involutions. For a holomorphic principal $G$-bundle $E$ on $X$, let $\sigma(E)$ be the
$C^\infty$ principal $G$-bundle on $X$ obtained by extending the structure group of $E$
using the homomorphism $\sigma$. So the total space of $\sigma(E)$ is identified with
that of $E$, but the action of $g\, \in\, G$ on $e\, \in\, E$ coincides with the
action of $\sigma(g)$ on $e\, \in\, \sigma(E)$. Consequently, the pullback
$\alpha^*\sigma(E)$ has a holomorphic structure given by the holomorphic structure of
$E$. Let
$$
\widetilde{\sigma}\, :\, E({\mathfrak g})\, \longrightarrow\,E({\mathfrak g})
$$
be the conjugate linear isomorphism that sends the equivalence class of
any $(e\, , v)\, \in\, E\times {\mathfrak g}$ to the equivalence class of $(e\, ,
d\sigma (v))$, where $d\sigma$ is the automorphism of $\mathfrak g$ corresponding to
$\sigma$. Let $\varphi$ be a Higgs field on $E$. Let $\sigma(\varphi)$ be the
$C^\infty$ section of $E({\mathfrak g})\otimes K$ defined by $\widetilde{\sigma}$
and the $C^\infty$ isomorphism $K\, \longrightarrow\, \overline{K}$ defined by
$df\, \longmapsto\, d\overline{f}$, where $f$ is any locally defined holomorphic function
on $X$.

We have involutions
\begin{equation}\label{involution-ab}
   \begin{aligned}
\iota(\alpha,\sigma)^{\pm}: \cM(G) & \,\longrightarrow\, \cM(G) \\
(E,\varphi) & \longmapsto\, (\alpha^*\sigma(E)\, ,\pm \alpha^\ast\sigma(\varphi)).
  \end{aligned}
\end{equation}

\begin{proposition}\label{p1}
The image of the map $$\cM(G,\alpha,\sigma,c,+)\,\lra\, \cM(G)$$
in \eqref{fo1} is contained in the
fixed point  locus of the involution $\iota(\alpha,\sigma)^{+}$. Moreover, the
fixed point locus of $\iota(\alpha,\sigma)^{+}$ in the smooth locus $\cM(G)^{\rm sm}
\,\subset\, \cM(G)$ is the intersection of $\cM(G)^{\rm sm}$ with the 
union of the images of
$\cM(G,\alpha,\sigma,c,+)$ as $c$ runs over $Z^\sigma_2$, where $Z^\sigma_2$ as before
is the subgroup of $Z^\sigma$ generated by the order two points.

Similarly, the fixed point locus of $\iota(\alpha,\sigma)^{-}$ in $\cM(G)^{\rm sm}$
is the intersection of $\cM(G)^{\rm sm}$ with the  union of the images of
$\cM(G,\alpha,\sigma,c,-)$ as $c$ runs over $Z^\sigma_2$.
\end{proposition}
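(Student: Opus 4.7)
The plan is to treat the two signs in parallel and prove the ``$+$'' case, the ``$-$'' case being obtained by replacing $\varphi$ with $-\varphi$ throughout. For the forward inclusion, I take an $(\alpha,\sigma,c,+)$-pseudo-real $G$-Higgs bundle $(E,\varphi,\widetilde{\alpha})$. The anti-holomorphic lift $\widetilde{\alpha}\colon E\lra E$ covering $\alpha$ with $\widetilde{\alpha}(eg)=\widetilde{\alpha}(e)\sigma(g)$ is precisely the datum of a holomorphic isomorphism of principal $G$-bundles $f\colon E\lra \alpha^\ast\sigma(E)$. Viewing $\widetilde{\alpha}$ on the adjoint twist $E(\lieg)\otimes K$ via (\ref{involution-ad}), the condition $\widetilde{\alpha}(\varphi)=\varphi$ becomes $f(\varphi)=\alpha^\ast\sigma(\varphi)$, so $(E,\varphi)$ and $(\alpha^\ast\sigma(E),\alpha^\ast\sigma(\varphi))$ determine the same point of $\cM(G)$, which is therefore fixed by $\iota(\alpha,\sigma)^+$.

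For the converse, let $[(E,\varphi)]\in\cM(G)^{\mathrm{sm}}$ be fixed by $\iota(\alpha,\sigma)^+$. The hypothesis supplies a holomorphic isomorphism of Higgs bundles $f\colon E\lra\alpha^\ast\sigma(E)$ carrying $\varphi$ to $\alpha^\ast\sigma(\varphi)$, and reversing the correspondence used above yields an anti-holomorphic lift $\widetilde{\alpha}\colon E\lra E$ of $\alpha$ with $\widetilde{\alpha}(eg)=\widetilde{\alpha}(e)\sigma(g)$ and $\widetilde{\alpha}(\varphi)=\varphi$. Its square $\widetilde{\alpha}^2$ is a holomorphic automorphism of $(E,\varphi)$ covering $\Id_X$. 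At a smooth point the automorphism group of a stable $G$-Higgs bundle reduces to the centre $Z$, so $\widetilde{\alpha}^2(e)=ez$ for a unique $z\in Z$. Comparing
\[
\widetilde{\alpha}(\widetilde{\alpha}^2(e))=\widetilde{\alpha}(ez)=\widetilde{\alpha}(e)\sigma(z)
\quad\text{with}\quad
\widetilde{\alpha}^2(\widetilde{\alpha}(e))=\widetilde{\alpha}(e)z
\]
forces $\sigma(z)=z$, that is, $z\in Z^\sigma$.

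It remains to normalise $z$ into $Z^\sigma_2$, which is the main obstacle and the reason the statement involves a union over $c$. By Remark \ref{order-2} the canonical map $Z^\sigma_2\lra Z^\sigma/(Z^\sigma)^2$ is surjective, so I write $z=c\cdot a^2$ with $c\in Z^\sigma_2$ and $a\in Z^\sigma$. Define $\widetilde{\alpha}'(e):=\widetilde{\alpha}(e)\cdot a^{-1}$; this is again an anti-holomorphic lift of $\alpha$, it retains the $\sigma$-twist on the $G$-action, and a direct computation using $a\in Z^\sigma$ gives
\[
(\widetilde{\alpha}')^2(e)=\widetilde{\alpha}\bigl(\widetilde{\alpha}(e)a^{-1}\bigr)a^{-1}=\widetilde{\alpha}^2(e)\,\sigma(a^{-1})\,a^{-1}=ez\,a^{-2}=ec.
\]
Since $a$ is central, its adjoint action on $E(\lieg)\otimes K$ is trivial, so $\widetilde{\alpha}'(\varphi)=\widetilde{\alpha}(\varphi)=\varphi$; thus $(E,\varphi,\widetilde{\alpha}')$ is $(\alpha,\sigma,c,+)$-pseudo-real and maps to $[(E,\varphi)]$ via (\ref{fo1}). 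The ``$-$'' case requires no extra argument: it only modifies the sign of the condition on $\varphi$, and centrality of $a$ preserves that sign. The point that makes the union over $c$ unavoidable is that the class $[z]\in Z^\sigma/(Z^\sigma)^2$ is intrinsically attached to the fixed point $[(E,\varphi)]$ and cannot be eliminated by rechoosing $\widetilde{\alpha}$.
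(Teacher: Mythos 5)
Your proof is correct and follows essentially the same route as the paper: the forward inclusion is read off from the definitions, and for the converse you interpret the fixed-point isomorphism as an anti-holomorphic lift $\widetilde{\alpha}$, use regular stability on $\cM(G)^{\rm sm}$ (so $\Aut(E,\varphi)=Z$) to see that $\widetilde{\alpha}^2$ acts by a central element, and then normalize it into $Z^\sigma_2$ exactly as in Remark \ref{order-2}. The only difference is presentational: you spell out the computations (that the central element lies in $Z^\sigma$, and the explicit modification $\widetilde{\alpha}'=\widetilde{\alpha}\cdot a^{-1}$ fixing the Higgs field condition) which the paper compresses into its citation of Remark \ref{order-2}.
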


\begin{proof}
{}From the definition of $\iota(\alpha,\sigma)^{+}$ (respectively,
$\iota(\alpha,\sigma)^{-}$) it follows immediately that $\cM(G,\alpha,\sigma,c,+)$
(respectively, $\cM(G,\alpha,\sigma,c,-)$) is contained in the fixed point locus
of $\iota(\alpha,\sigma)^{+}$ (respectively, $\iota(\alpha,\sigma)^{-}$).

A $G$-Higgs bundle $(E\, ,\varphi)$ lies in $\cM(G)^{\rm sm}$ 
if $(E\, ,\varphi)$ is
stable and the automorphism group of $(E\, ,\varphi)$ coincides with the
center $Z$
of $G$, i.e., if the Higgs bundle is simple as defined in
\cite{garcia-prada-gothen-mundet,garcia-prada,BH}
(we recall that such bundles are called regularly stable). 
Suppose that $(E\, ,\varphi)\in \cM(G)^{\rm sm}$ is fixed under the involution 
$\iota(\alpha,\sigma)^{+}$ (respectively $\iota(\alpha,\sigma)^{-}$). This
means that there exists an isomorphism
$$
f\, :\, E\,\longrightarrow\, \alpha^*\sigma(E)
$$
such that $\alpha^*\sigma(f)\circ f\in \Aut(E,\varphi)$, 
but since $\Aut(E,\varphi)= Z$, we have that 
$\alpha^*\sigma(f)\circ f=c\in Z$.
We can interpret $f$ as a map $f'\,:\, E\, \too\, \sigma(E)$ such that
$\sigma(f')\circ f'=c\in Z$.
Identifying $\sigma(E)$ with $E$ with multiplication on 
the right defined by $e\cdot g= e\sigma(g)$, where $g\in G$ and $e\in E$,
we are indeed defining  a $(\alpha,\sigma,c,+)$ (respectively $(\alpha,\sigma,c,-)$)
pseudo-real structure on $(E,\varphi)$, since we can always assume that $c\in Z_2^\sigma$,
as explained in Remark \ref{order-2}. In other words, $(E\, ,\varphi)$ lies in the
image of $\cM(G,\alpha,\sigma,c,+)$ (respectively $\cM(G,\alpha,\sigma,c,-)$).
\end{proof}

\begin{remark}
In Definition \ref{definition-pseudo-real} we could have  defined a pseudo-real structure
replacing $\widetilde{\alpha}$ by  an anti-holomorphic map $\widetilde{\alpha}'\,:
\, E\,\longrightarrow\, \sigma(E)$ of $G$--bundles
covering $\alpha$. Although $\sigma(E)$ is no longer a holomorphic
bundle, its total space is a complex manifold because it is identified with the
total space of $E$, and hence the
anti-holomorphicity condition makes sense. The condition 
$\widetilde{\alpha}(eg)\,=\,\widetilde{\alpha}(e)\sigma(g)$ 
in Definition \ref{definition-pseudo-real} 
is now automatic since  $\widetilde{\alpha}'$ is a $G$--bundle map.
\end{remark}

\begin{proposition}\label{proposo1}
Let $\sigma$ and $\sigma'$ be inner equivalent elements in $\Conj(G)$,
i.e., they define the same element in $\Out_2(G)$. Then
$$
\iota(\alpha,\sigma)^{+}\,=\, \iota(\alpha,\sigma')^{+}\, \
(respectively,\,~  \iota(\alpha,\sigma)^{-}\,=\, \iota(\alpha,\sigma')^{-}\,).
$$
\end{proposition}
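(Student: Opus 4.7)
The plan is to exhibit an explicit $G$-Higgs bundle isomorphism between $(\alpha^*\sigma(E),\pm\alpha^*\sigma(\varphi))$ and $(\alpha^*\sigma'(E),\pm\alpha^*\sigma'(\varphi))$ for every $(E,\varphi)\in\cM(G)$, thereby showing that the two involutions coincide pointwise on the moduli space.

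Since $\sigma$ and $\sigma'$ are inner equivalent, I would write $\sigma'=\Int(g_0)\circ\sigma$ for some $g_0\in G$, so that $\sigma'(g)=g_0\sigma(g)g_0^{-1}$. Recall that the total spaces of $\sigma(E)$ and $\sigma'(E)$ coincide with the total space of $E$ as a $C^\infty$ manifold (indeed as a complex manifold), only the right $G$-actions differ: $e*_\sigma g=e\cdot\sigma(g)$ and $e*_{\sigma'}g=e\cdot\sigma'(g)$, where $\cdot$ denotes the original right action of $G$ on $E$. I would then define
\[
\phi\,:\,\sigma(E)\,\longrightarrow\,\sigma'(E),\qquad \phi(e)\,=\,e\cdot g_0^{-1},
\]
using the original right action, and check the equivariance:
\[
\phi(e*_\sigma g)\,=\,e\cdot\sigma(g)\cdot g_0^{-1}\,=\,(e\cdot g_0^{-1})\cdot g_0\sigma(g)g_0^{-1}\,=\,\phi(e)*_{\sigma'}g.
\]
Because $g_0$ is constant on $X$ and the complex structures on $\sigma(E)$ and $\sigma'(E)$ are the same as that of $E$, the map $\phi$ is biholomorphic; pulling back by $\alpha$ yields an isomorphism of holomorphic principal $G$-bundles $\alpha^*\phi\colon \alpha^*\sigma(E)\,\longrightarrow\,\alpha^*\sigma'(E)$.

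Next I would verify that $\alpha^*\phi$ sends $\alpha^*\sigma(\varphi)$ to $\alpha^*\sigma'(\varphi)$, thereby giving an isomorphism of Higgs bundles (with the same sign on both sides). The conjugate linear map $\widetilde{\sigma}\colon E(\lieg)\to E(\lieg)$ sends $[(e,v)]$ to $[(e,d\sigma(v))]$. The induced bundle isomorphism $E(\lieg)\to\sigma'(E)(\lieg)$ coming from $\phi$ is $[(e,v)]\,\longmapsto\,[(\phi(e),\Ad(g_0)(v))]$, so composing with $\widetilde{\sigma}$ yields $[(e,v)]\mapsto[(\phi(e),\Ad(g_0)d\sigma(v))] = [(\phi(e),d\sigma'(v))]$, which is precisely $\widetilde{\sigma'}$ followed by $\phi$. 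Applying $\otimes K$ and pulling back by $\alpha$ shows that $\alpha^*\phi$ identifies $\pm\alpha^*\sigma(\varphi)$ with $\pm\alpha^*\sigma'(\varphi)$, as required.

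Therefore $[\alpha^*\sigma(E),\pm\alpha^*\sigma(\varphi)]=[\alpha^*\sigma'(E),\pm\alpha^*\sigma'(\varphi)]$ in $\cM(G)$ for every class $[(E,\varphi)]$, which gives $\iota(\alpha,\sigma)^{\pm}=\iota(\alpha,\sigma')^{\pm}$. The only real subtlety is the bookkeeping of the three different right $G$-actions on the common total space of $E$; the calculation itself parallels the argument already carried out in Proposition \ref{almost-inner}, so no new ingredient beyond inner equivalence is needed.
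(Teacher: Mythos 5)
Your proof is correct and takes essentially the same route as the paper: the paper simply observes that replacing $\sigma$ by $\Int(g_0)\circ\sigma$ composes $\iota(\alpha,\sigma)^{\pm}$ with the automorphism of $\cM(G)$ induced by the inner automorphism $g\mapsto g_0gg_0^{-1}$, which acts trivially on the moduli space. Your explicit isomorphism $e\mapsto e\cdot g_0^{-1}$ (together with the identity $d\sigma'=\Ad(g_0)\circ d\sigma$, and implicitly the fact that $g_0\sigma(g_0)$ is central since $\sigma'$ is an involution) is exactly the pointwise verification of that one-line assertion, so no new ingredient is involved.
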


\begin{proof}
If we replace $\sigma$ by $\sigma'\, :=\, g_0
\sigma g_0^{-1}$, where $g_0\, \in\, G$, then the corresponding anti-holomorphic involution of the
moduli space is replaced by its composition with the holomorphic automorphism 
of the moduli space corresponding to the automorphism of $G$ defined by
$g\,\longmapsto\, g_0g g_0^{-1}$. But this automorphism of $G$ produces the identity map
of the moduli space. Therefore, the anti-holomorphic involution of the
moduli space is unchanged if $\sigma$ is replaced by $\sigma'$.
\end{proof}

\begin{remark}
Consider the identification
between the $(\alpha,\sigma,c)$-pseudo-real principal $G$-bundles and the
$(\alpha,\sigma',c')$-pseudo-real principal $G$-bundles on $X$ given by  Proposition
\ref{almost-inner} when $\sigma$ and $\sigma'$ are inner equivalent. Note that
a Higgs  field on a $(\alpha,\sigma',c')$-pseudo-real principal
$G$-bundle produces a Higgs field on the corresponding $(\alpha,\sigma,c)$-pseudo-real
principal $G$-bundle, and vice versa. We thus have that by Proposition \ref{almost-inner}
$\cM(G,\alpha,\sigma,c,+)$ is isomorphic to $\cM(G,\alpha,\sigma',c',+)$
(respectively  $\cM(G,\alpha,\sigma,c,-)$ is isomorphic to $\cM(G,\alpha,\sigma',c',-)$)
giving the same image under the corresponding maps to $\cM(G)$.
\end{remark}

\subsection{Correspondence with representations for $\iota(\alpha,\sigma)^+$}

We have the orbifold fundamental group of $(X,\alpha)$ that we will
denote $\Gamma(X,\alpha)$ (see \cite{BGH} for example). This fits into an exact sequence
\begin{equation}\label{orbifold-pi1}
1\,\lra\, \pi_1(X, x_0) \,\lra\, \Gamma(X,x_0) \,\lra\, \Z/2\Z \lra 1\, .
\end{equation}

Let $\text{Map}'(\Gamma(X,x_0)\, , G \times ({\mathbb Z}/2{\mathbb Z}))$ be the
space of all maps
$$
\delta\, :\, \Gamma(X,x_0)\,\longrightarrow\,G \times ({\mathbb Z}/2{\mathbb Z})
$$
such that the following diagram is commutative:
\begin{equation}\label{dil}
\begin{matrix}
1 & \longrightarrow & \pi_1(X,x_0)&\longrightarrow &\Gamma(X,x_0) &
\stackrel{\eta}{\longrightarrow} &
{\mathbb Z}/2{\mathbb Z} &\longrightarrow & 1\\
&& \Big\downarrow && ~\Big\downarrow\delta && \Vert\\
1 & \longrightarrow & G &\longrightarrow & G \times ({\mathbb Z}/2{\mathbb Z}) &
\longrightarrow &
{\mathbb Z}/2{\mathbb Z} &\longrightarrow & 1.
\end{matrix}
\end{equation}

Take an element $c\, \in\, Z^\sigma_2$ in the subgroup generated by the elements of
$Z^\sigma$ order two. Using it, we will define another group structure
on $G \times ({\mathbb Z}/2{\mathbb Z})$. The group operation is given by
$$
(g_1\, ,e_1)\cdot (g_2\, ,e_2)\,=\, (g_1(\sigma)^{e_1}(g_2)c^{e_1e_2}\, ,
e_1+e_2)\, .
$$
Note that when $c=1$ we obtain a semidirect product.

Let
$\Hom_c(\Gamma(X,x_0)\, , G \times ({\mathbb Z}/2{\mathbb Z})$ be the
space of all maps
$$
\delta\, \in\, \text{Map}'(\Gamma(X,x_0)\, , G \times ({\mathbb Z}/2{\mathbb Z}))
$$
such that $\delta$ is a homomorphism with respect to this group structure.

Two elements $\delta\, ,\delta'\, \in\, \Hom_c(\Gamma(X,x_0)\, ,
G \times ({\mathbb Z}/2{\mathbb Z}))$ are called {\bf equivalent} if there is an
element $g\, \in\, G$ such
that $\delta'(z)\,=\, g^{-1}\delta(z)g$ for all $z\, \in\, \pi_1(X,\alpha)$.

\begin{theorem}\label{thm1}
The moduli space $\cM(G,\alpha,\sigma,c,+)$ is identified with the space of equivalence
classes of reductive elements of $\Hom_c(\Gamma(X,x_0)\, ,
G \times ({\mathbb Z}/2{\mathbb Z}))$.
\end{theorem}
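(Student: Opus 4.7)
The plan is to transport the problem through the non-abelian Hodge correspondence (Theorem \ref{na-Hodge}) and then read off the extra data provided by the pseudo-real structure from an equivariant trivialization of the associated flat connection.

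First, I would upgrade the harmonic metric to one compatible with $\widetilde{\alpha}$. Starting from a polystable $(\alpha,\sigma,c,+)$-pseudo-real Higgs bundle $(E,\varphi,\widetilde{\alpha})$, Proposition \ref{real-polystability-versus-polystability} together with Theorem \ref{higgs-hk} produces a reduction $h$ of $E$ to $H$ solving Hitchin's equation, where $H$ is chosen as the fixed locus of a compact conjugation $\tau$ commuting with $\sigma$. Because $\sigma$ preserves $H$ and commutes with $\tau$, the pullback $\widetilde{\alpha}^*h$ is again a solution of the same Hitchin equation; uniqueness up to $H$-gauge, combined with the fact that $\widetilde{\alpha}^2$ acts as a central element of order two, allows one to average and replace $h$ by a $\widetilde{\alpha}$-invariant solution. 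This step is the main technical obstacle, and it is exactly the argument carried out in \cite{BGH}; polystability is used in an essential way. With such an $h$, the flat connection $D = d_A + \varphi - \tau(\varphi)$ produced by Proposition \ref{prop:circle} and Theorem \ref{corlette} is preserved by the combined lift built out of $\widetilde{\alpha}$ and $\sigma$.

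Next, I would define $\delta$ by monodromy. Fix $x_0$ and a trivialization of the fibre $E_{x_0}\cong G$ compatible with $h$. For $\gamma\in\pi_1(X,x_0)$ set $\delta(\gamma)=(\rho(\gamma),0)$, where $\rho$ is the monodromy of $D$. For a class $\widetilde\gamma\in\Gamma(X,x_0)$ lifting the non-trivial element of $\Z/2\Z$, represent it by a path from $x_0$ to $\alpha(x_0)$; composing parallel transport along this path with the $\widetilde{\alpha}$-induced fibre isomorphism $E_{\alpha(x_0)}\to E_{x_0}$ yields an element $g\in G$ and we set $\delta(\widetilde\gamma)=(g,1)$. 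Verifying that $\delta$ is a homomorphism for the twisted product is a direct unwinding: the semilinearity condition $\widetilde{\alpha}(e\cdot h)=\widetilde{\alpha}(e)\sigma(h)$ produces the $\sigma$-factor whenever an orientation-reversing element is inserted, and the identity $\widetilde{\alpha}^2=c$ produces the twist by $c^{e_1e_2}$ precisely when both arguments lift the non-trivial element of $\Z/2\Z$. Commutativity of diagram \eqref{dil} is built in, and reductivity of $\delta$ reduces to reductivity of $\rho|_{\pi_1}$, which follows from polystability through the established non-abelian Hodge correspondence.

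Finally, I would construct the inverse. Given a reductive $\delta\in\Hom_c(\Gamma(X,x_0),G\times\Z/2\Z)$, its restriction to $\pi_1(X,x_0)$ is a reductive representation into $G$ and yields via Theorem \ref{na-Hodge} a polystable Higgs bundle $(E,\varphi)$. The value of $\delta$ on any lift of the non-trivial element of $\Z/2\Z$ is then interpreted as an anti-holomorphic fibre-wise identification between $E$ and $\alpha^*\sigma(E)$; the homomorphism condition for the twisted product forces this map to square to the central element $c$ and to carry $\varphi$ to $+\varphi$, yielding a $(\alpha,\sigma,c,+)$-pseudo-real structure on $(E,\varphi)$. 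Two such data yield isomorphic pseudo-real Higgs bundles if and only if the corresponding $\delta$'s differ by conjugation by an element of $G$, matching the equivalence relation in the statement. Aside from the $\widetilde{\alpha}$-invariant harmonic metric, which is the genuine technical point, everything else is bookkeeping inside the non-abelian Hodge correspondence.
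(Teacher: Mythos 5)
Your proposal is correct and follows essentially the same route as the paper: the paper proves Theorem \ref{thm1} simply by invoking Proposition 5.6 of \cite{BGH}, which is precisely the pseudo-real (equivariant) non-abelian Hodge correspondence you sketch — an $\widetilde{\alpha}$-compatible harmonic reduction, the flat connection $d_A+\varphi-\tau(\varphi)$, and the monodromy extended over $\Gamma(X,x_0)$ landing in the $c$-twisted group structure on $G\times(\Z/2\Z)$. Since you also defer the genuinely technical step (existence of the invariant harmonic solution for polystable objects) to \cite{BGH}, your argument is an unwinding of the same cited result rather than a different proof.
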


\begin{proof}
This follows from Proposition 5.6 of \cite{BGH}.
\end{proof}



\begin{theorem}\label{thm0a}
Consider the involution $\iota(\alpha,\sigma)^+$ of $\cM(G)$. It is anti-holomorphic with
respect to the almost complex structures $J_1$ and $J_2$,  and it is holomorphic with
respect to $J_3$.
\end{theorem}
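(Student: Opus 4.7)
The plan is to compute the differential $T := d\iota(\alpha,\sigma)^+$ explicitly in terms of the hyperk\"ahler model of Section 2.4 and to verify directly that $T$ anti-commutes with both $J_1$ and $J_2$. The third assertion, $T J_3 = J_3 T$, then follows automatically from $J_3 = J_1 J_2$, since $T J_1 J_2 = -J_1 T J_2 = J_1 J_2 T$. So the real work is confined to the two anti-commutation relations.

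At a smooth fixed point $(E,\varphi)$, the pseudo-real structure $\widetilde{\alpha}$ identifies $\alpha^\ast \sigma(E)$ with $E$, and $T$ acts on tangent vectors $(a,\psi) \in \Omega^{0,1}(X,E(\lieg)) \oplus \Omega^{1,0}(X,E(\lieg))$ by
$$
T(a,\psi) \,=\, \bigl(\alpha^\ast\widetilde{\sigma}(a),\,\alpha^\ast\widetilde{\sigma}(\psi)\bigr).
$$
The key observation is that both $\alpha^\ast$ and $\widetilde{\sigma}$ individually swap bidegree $(p,q) \leftrightarrow (q,p)$: the pullback $\alpha^\ast$ does so $\C$-linearly (since $\alpha$ is anti-holomorphic), while $\widetilde{\sigma}$ does so $\C$-anti-linearly (anti-linear $d\sigma$ on $\lieg$, combined with the anti-linear identification $K \to \overline{K}$, $df \mapsto d\overline{f}$, on the form factor). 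Consequently, $T$ preserves bidegree on each summand and is $\C$-anti-linear there.

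Given $J_1(a,\psi) = (\sqrt{-1}\,a,\sqrt{-1}\,\psi)$, the anti-linearity of $T$ yields $TJ_1 = -J_1 T$ immediately. For $J_2(a,\psi) = (-\sqrt{-1}\,\tau(\psi),\,\sqrt{-1}\,\tau(a))$, the identity $TJ_2 = -J_2 T$ reduces to the intertwining relation $T\tau = \tau T$ on $\lieg$-valued forms. This in turn holds because $\sigma$ was chosen to commute with $\tau$ on $\lieg$ (see Section 3.1), while the form-level ingredients---complex conjugation (for $\tau$) and pullback by $\alpha$ (for $T$)---manifestly commute.

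The main obstacle will be the careful bookkeeping of $\C$-linearity versus $\C$-anti-linearity in the composition of the two bidegree-swapping operators $\alpha^\ast$ and $\widetilde{\sigma}$, and the verification that the algebraic commutation $\sigma\tau = \tau\sigma$ on $\lieg$ promotes to $T\tau = \tau T$ on form-valued sections. Once these are settled, the three assertions follow from the direct tangent-space computations sketched above.
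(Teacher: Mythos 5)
Your route is genuinely different from the paper's: the paper never touches the hyper-K\"ahler model directly, but instead observes that $J_1$ is the Dolbeault complex structure (so anti-holomorphy is immediate from the definition of $\iota(\alpha,\sigma)^+$), identifies the involution under $\cM(G)\cong\calR(G)$ with $\rho\mapsto \sigma\circ\rho\circ\alpha_*$ on the Betti side --- a composition of the holomorphic map induced by $\alpha$ on $\pi_1$ with the anti-holomorphic post-composition by $\sigma$, hence anti-holomorphic for $J_2$ --- and then gets the $J_3$ statement from $J_3=J_1J_2$ exactly as you do. Your plan (compute $d\iota$ on $\Omega^{0,1}(X,\bE_G(\lieg))\oplus\Omega^{1,0}(X,\bE_G(\lieg))$ and check the (anti-)commutation with $J_1,J_2$ by hand) is the Baraglia--Schaposnik-style argument and is viable, and your reductions are algebraically correct: anti-linearity of $T$ gives $TJ_1=-J_1T$, the identity $TJ_2=-J_2T$ is indeed equivalent to $T\tau=\tau T$, and $TJ_3=J_3T$ then follows formally.

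There are, however, two concrete gaps as written. First, you only compute $T$ at smooth \emph{fixed} points, using $\widetilde{\alpha}$ to identify $\alpha^*\sigma(E)$ with $E$; but (anti-)holomorphy of $\iota(\alpha,\sigma)^+$ is a statement at every point of the smooth locus, where $d\iota$ maps $T_{(E,\varphi)}\cM(G)$ to the tangent space at the \emph{image} point and no $\widetilde\alpha$ is available (nor needed) --- the computation should be done there. Second, and more seriously, the step ``$T\tau=\tau T$ because $\sigma\tau=\tau\sigma$'' conflates the group-level compact conjugation with the conjugation $\tau$ appearing in $J_2$, which is determined \emph{fiberwise by the harmonic reduction} solving the Hitchin equation at the point in question; the identification of the abstract tangent space with the model carrying $J_2$ uses that reduction. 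So you must additionally show that the harmonic metric of $(\alpha^*\sigma(E),\alpha^*\sigma(\varphi))$ is the transport of the harmonic metric of $(E,\varphi)$ --- by uniqueness of solutions to the Hitchin equation, using that $\sigma$ preserves $H$ --- or equivalently fix a $\sigma|_H$-equivariant lift of $\alpha$ to $\bE_H$ and verify the formula for the involution on the space of solutions via Theorem \ref{hk}. (You should also note that a general $\sigma$ in the theorem need not commute with $\tau$; one normalizes it within its inner class using Cartan's result together with Proposition \ref{proposo1}.) This harmonic-metric compatibility is precisely the non-abelian Hodge input that the paper's proof packages into the assertion that $\iota(\alpha,\sigma)^+$ corresponds to $b\circ\alpha''$ on $\calR(G)$; without it your $J_2$ step does not close.
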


\begin{proof}
The almost complex structure $J_1$ is the almost complex structure of the Dolbeault
moduli space (the moduli space of Higgs bundles). Therefore, $\iota(\alpha,\sigma)^+$
is anti-holomorphic with respect to $J_1$.

The almost complex structure $J_2$ is the almost complex structure of the Betti
moduli space (the representation space $\calR(G)$). Note  that
the almost complex structure of the Betti    
moduli space coincides with that of the de Rham moduli space.

As before, fix a base point $x_0\, \in\, X$. The involution $\alpha$ of $X$ produces an
isomorphism
$$
\alpha'\, :\, \pi_1(X, x_0)\, \longrightarrow\, \pi_1(X, \alpha(x_0))\, .
$$
This in turn gives a biholomorphism
$$
\alpha''\, :\,  \Hom^{+}(\pi_1(X,x_0),G) / G \, \longrightarrow\, \Hom^{+}(\pi_1(X,
\alpha(x_0)),G) / G\, .
$$
As noted before, $\calR(G) \,=\, \Hom^{+}(\pi_1(X,x_0),G) / G$ is independent
of the choice of the base point. So $\alpha''$ is a biholomorphism
\begin{equation}\label{bhm}
\alpha''\, :\, \calR(G)\,\longrightarrow\, \calR(G)\, .
\end{equation}
Since $\alpha$ is an involution, it follows that $\alpha''$ is also an involution.

Let
$$
b\, :\, \calR(G) \,=\, \Hom^{+}(\pi_1(X,x_0),G) / G\,\longrightarrow\,
\calR(G)
$$
be the anti-holomorphic involution defined by $\rho\, \longmapsto\, \sigma\circ\rho$.
In other words, $b$ sends a homomorphism $\rho\, :\, \pi_1(X)\,\longrightarrow\, G$
to the composition
$$
\pi_1(X,x_0)\,\stackrel{\rho}{\longrightarrow}\, G\,\stackrel{\sigma}{\longrightarrow}
\, G\, .
$$
Clearly $b$ commutes with the above involution $\alpha''$ in \eqref{bhm}. Therefore,
$b\circ\alpha''$ is also an involution. Note that $b\circ\alpha''$ is anti-holomorphic
because $\alpha''$ is holomorphic and $b$ is anti-holomorphic.

The above involution $b\circ\alpha''$ of $\calR(G)$ coincides with the
involution $\iota(\alpha,\sigma)^+$ of $\cM(G)$ under the correspondence
$\cM(G)\cong \calR(G)$.
Therefore, $\iota(\alpha,\sigma)^+$ is anti-holomorphism with respect to $J_2$.

We recall that $J_3\,=\,J_1J_2$. Since $\iota(\alpha,\sigma)^+$
is anti-holomorphic with respect to both $J_1$ and $J_2$, from the
above identity it follows immediately that $\iota(\alpha,\sigma)^+$ is
holomorphic with respect to $J_3$.
\end{proof}

Since $\calR(G)$ is hyper-K\"ahler, the holomorphic symplectic form $\Omega_2$ on it 
is flat with respect to the K\"ahler structure $\omega_2$ corresponding to $J_2$.
Similarly, the holomorphic symplectic form $\Omega_1$ with respect to $J_1$
is flat with respect to the K\"ahler structure $\omega_1$ corresponding to $J_1$.
In particular, $\calR(G)$ and $(\cM(G)\, ,J_1\, , \omega_1\, ,\Omega_1)$ are Calabi-Yau.

\begin{theorem}\label{thm1a}
The moduli space $\cM(G,\alpha,\sigma,c,+)$ is a special Lagrangian subspace
of $\calR(G)$. Similarly, it is special Lagrangian with respect to
$(\cM(G)\, ,J_1\, , \omega_1\, , \Omega_1)$. Also, it is complex Lagrangian
with respect to $(J_3\, , \Omega_3)$.
\end{theorem}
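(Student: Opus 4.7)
The plan is to derive all three statements from Theorem~\ref{thm0a} together with the fact that $\iota(\alpha,\sigma)^{+}$ is an isometry of the hyper-K\"ahler Riemannian metric $g$ on $\cM(G)$. Letting $L$ denote the image of $\cM(G,\alpha,\sigma,c,+)$ in the smooth locus of $\cM(G)$, Proposition~\ref{p1} identifies $L$ with (a component of) the fixed locus of $\iota(\alpha,\sigma)^{+}$, and the three conclusions then reduce to computing how $\iota(\alpha,\sigma)^{+}$ pulls back the K\"ahler forms $\omega_{i}$ and the holomorphic symplectic forms $\Omega_{i}$ of the hyper-K\"ahler package of Section~\ref{section-hitchin-equations}.

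First I would verify the isometry property at the level of the hyper-K\"ahler quotient: on $\XXX=\AAA\times\Omega$ the lift of the involution sends $(A,\varphi)\mapsto(\alpha^{\ast}\sigma(A),\alpha^{\ast}\sigma(\varphi))$, and this preserves the $L^{2}$-metric coming from $B$ because $\sigma$ is a real automorphism of $\lieg$ commuting with the compact conjugation $\tau$ (Section~\ref{realforms-group}) and $\alpha$ is an orientation-reversing isometry of $X$ intertwining complex conjugation on $1$-forms. Combining the isometry property with Theorem~\ref{thm0a} through the relation $\omega_{i}(\cdot,\cdot)=g(J_{i}\cdot,\cdot)$ gives
$$
(\iota(\alpha,\sigma)^{+})^{\ast}\omega_{1}=-\omega_{1},\qquad
(\iota(\alpha,\sigma)^{+})^{\ast}\omega_{2}=-\omega_{2},\qquad
(\iota(\alpha,\sigma)^{+})^{\ast}\omega_{3}=\omega_{3}.
$$
The quaternionic identities $\Omega_{1}=\omega_{2}+\sqrt{-1}\,\omega_{3}$, $\Omega_{2}=\omega_{3}+\sqrt{-1}\,\omega_{1}$, $\Omega_{3}=\omega_{1}+\sqrt{-1}\,\omega_{2}$ then immediately yield
$$
(\iota(\alpha,\sigma)^{+})^{\ast}\Omega_{1}=-\overline{\Omega_{1}},\qquad
(\iota(\alpha,\sigma)^{+})^{\ast}\Omega_{2}=\overline{\Omega_{2}},\qquad
(\iota(\alpha,\sigma)^{+})^{\ast}\Omega_{3}=-\Omega_{3}.
$$

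Restricting to $L$, where $\iota(\alpha,\sigma)^{+}$ is the identity, every form $\eta$ satisfies $\eta|_{L}=(\iota(\alpha,\sigma)^{+})^{\ast}\eta|_{L}$. Consequently $\omega_{1}|_{L}=\omega_{2}|_{L}=0$, so $L$ is isotropic both in $(\cM(G),\omega_{1})$ and in $(\calR(G),\omega_{2})$; since the fixed locus of an isometric anti-holomorphic involution on a K\"ahler manifold is half-dimensional, $L$ is Lagrangian in both. The relation $\Omega_{2}|_{L}=\overline{\Omega_{2}}|_{L}$ shows $\Omega_{2}|_{L}$ is real, which is exactly the special Lagrangian condition with phase zero for $(\calR(G),J_{2},\omega_{2},\Omega_{2})$; the relation $\Omega_{1}|_{L}=-\overline{\Omega_{1}}|_{L}$ shows $\Omega_{1}|_{L}$ is purely imaginary, so $L$ is special Lagrangian with respect to the holomorphic volume form $\sqrt{-1}\,\Omega_{1}$ on $(\cM(G),J_{1},\omega_{1})$, i.e.\ with phase $\pi/2$ for $\Omega_{1}$. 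Finally, Theorem~\ref{thm0a} says $\iota(\alpha,\sigma)^{+}$ is $J_{3}$-holomorphic, so $L$ is a $J_{3}$-complex submanifold; combined with $\Omega_{3}|_{L}=0$ this gives the complex Lagrangian property for $(J_{3},\Omega_{3})$.

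The main technical obstacle will be the isometry verification and the half-dimensionality count for $L$; both must be carried out cleanly in the smooth locus so that the linear-algebra identities above make sense as statements about honest submanifolds. Once those are secured, everything else is just bookkeeping inside the hyper-K\"ahler package.
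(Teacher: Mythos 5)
Your proposal is correct and follows essentially the same route as the paper: it derives everything from Theorem~\ref{thm0a} plus the isometry property, computes the pullbacks of the $\omega_i$ and $\Omega_i$, and restricts to the fixed locus of $\iota(\alpha,\sigma)^{+}$, invoking the standard fact that the fixed locus of an isometric anti-holomorphic involution of a Calabi--Yau/hyper-K\"ahler space is (special) Lagrangian. The only difference is that you spell out the isometry verification on $\AAA\times\Omega$ and the phase bookkeeping for $\Omega_1,\Omega_2$, which the paper simply asserts.
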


\begin{proof}
Since the involution $\iota(\alpha,\sigma)^+$ is holomorphic with
respect to $J_3$, it follows that $\cM(G,\alpha,\sigma,c,+)$ is a
holomorphic subspace with respect to $J_3$. Recall that $\Omega_3\,=\,
\omega_1+\sqrt{-1}\omega_2$. The involution $\iota(\alpha,\sigma)^+$
is anti-holomorphic with respect to $J_1$ and $J_2$ and it is an isometry.
Hence $\iota(\alpha,\sigma)^+$ takes $\omega_1$ and $\omega_2$ to
$-\omega_1$ and $-\omega_2$ respectively. Hence $\iota(\alpha,\sigma)^+$ takes
$\Omega_3$ to $-\Omega_3$. This immediately implies that
$\cM(G,\alpha,\sigma,c,+)$ is Lagrangian with respect to $\Omega_3$.

Since $\cM(G,\alpha,\sigma,c,+)$ is the fixed
point locus of an isometric anti-holomorphic involution of the
Calabi-Yau space $\calR(G)$, it follows
that $\cM(G,\alpha,\sigma,c,+)$ is a special Lagrangian subspace
of $\calR(G)$. For a similar reason, $\cM(G,\alpha,\sigma,c,+)$ is a special
Lagrangian subspace of $(\cM(G)\, ,J_1\, , \omega_1)$.
\end{proof}

\subsection{Correspondence with representations for $\iota(\alpha,\sigma)^-$}
Next we consider the involution $\iota(\alpha,\sigma)^-$.

Consider the holomorphic involution  $\theta=\sigma\tau$ of $G$ as 
defined earlier in Section \ref{realforms-group}.
Using $c\,\in\, Z^\sigma_2$, we will define yet another group structure
on $G \times ({\mathbb Z}/2{\mathbb Z})$. The group operation is given by
$$
(g_1\, ,e_1)\cdot (g_2\, ,e_2)\,=\, (g_1(\theta)^{e_1}(g_2)c^{e_1e_2}\, ,
e_1+e_2)\, .
$$
Let
$\Hom^{-}_c(\pi_1(X,\alpha)\, , G \times ({\mathbb Z}/2{\mathbb Z}))$ be the
space of all maps
$$
\delta\, \in\, \text{Map}'(\pi_1(X,\alpha)\, , G \times ({\mathbb Z}/2{\mathbb Z}))
$$
such that $\delta$ is a homomorphism with respect to this new group structure.

Two elements $\delta'\, ,\delta'\, \in\, \Hom^{-}_c(\pi_1(X,\alpha)\, ,
G \times ({\mathbb Z}/2{\mathbb Z}))$ are called \textit{equivalent} if there is an
element $g\, \in\, G$ such
that $\delta'(z)\,=\, g^{-1}\delta(z)g$ for all $z\, \in\, \pi_1(X,\alpha)$.

\begin{theorem}\label{thm2}
The moduli space $\cM(G,\alpha,\sigma,c,-)$ is identified with the space of equivalence
classes of reductive elements of $\Hom^{-}_c(\pi_1(X,\alpha)\, ,
G \times ({\mathbb Z}/2{\mathbb Z}))$.
\end{theorem}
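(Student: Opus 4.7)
The plan is to mirror the proof of Theorem \ref{thm1} by transporting the pseudo-real structure across the non-abelian Hodge correspondence of Theorem \ref{na-Hodge}. The new ingredient is that the sign $\widetilde{\alpha}(\varphi)=-\varphi$ converts the anti-holomorphic twist by $\sigma$, which appears on the Higgs bundle side, into the holomorphic twist by $\theta=\sigma\tau$ on the representation side.

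Starting from a polystable $(\alpha,\sigma,c,-)$-pseudo-real Higgs bundle $(E,\varphi,\widetilde{\alpha})$, I would first invoke Proposition \ref{real-polystability-versus-polystability} and Theorem \ref{higgs-hk} to obtain a harmonic reduction $h$ of $E$ to $H$ solving Hitchin's equations. Because $\sigma$ commutes with $\tau$ and the right-hand side of the Hitchin equation depends on $\varphi$ only through the commutator $[\varphi,\tau(\varphi)]$ (which is insensitive to a simultaneous sign flip of $\varphi$), the pulled-back reduction $\widetilde{\alpha}^\ast h$ is again a Hitchin reduction for $(E,\varphi)$. Uniqueness up to the action of $\HHH$ then lets us assume that $h$ is fixed by $\widetilde{\alpha}$. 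With such an $h$, form the flat $G$-connection $D=d_A+\psi$ where $\psi=\varphi-\tau(\varphi)$.

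The key calculation is to identify how $\widetilde{\alpha}$ acts on $D$. Since $h$ is $\widetilde{\alpha}$-invariant and $\sigma$ commutes with $\tau$, the decomposition $\bE_G(\lieg)=\bE_H(\lieh)\oplus \bE_H(\sqrt{-1}\lieh)$ is preserved, and $\widetilde{\alpha}$ acts by $\sigma$ on $\lieh$ and by $-\sigma$ on $\sqrt{-1}\lieh$ because $\sigma$ is anti-linear. Using $\widetilde{\alpha}(\varphi)=-\varphi$ together with the commutation of $\widetilde{\alpha}$ with $\tau$, one obtains $\widetilde{\alpha}(\psi)=-\psi$. Combining the bundle action of $\widetilde{\alpha}$ on $\bE_H(\sqrt{-1}\lieh)$ with this sign flip gives precisely the action of $\theta=\sigma\tau$ extended complex-linearly; in other words, the bundle map that preserves $D$ acts on $G$-fibers through $\theta$ rather than through $\sigma$. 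This is the heart of the argument and is where the commutativity of $\sigma$ and $\tau$ is used decisively.

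Passing to holonomy, the representation $\rho\colon\pi_1(X,x_0)\to G$ associated to $D$ satisfies $\theta(\rho(\alpha_\ast\gamma))=g_0\rho(\gamma)g_0^{-1}$ for some $g_0\in G$, with $g_0\theta(g_0)=c$ coming from $\widetilde{\alpha}^2=c$. Setting $\delta|_{\pi_1(X,x_0)}(\gamma)=(\rho(\gamma),0)$ and $\delta(\gamma_0)=(g_0,1)$ for a chosen lift $\gamma_0\in\Gamma(X,x_0)$ of the generator of $\Z/2\Z$ in \eqref{orbifold-pi1}, the $\theta$- and $c$-twisted group law on $G\times\Z/2\Z$ shows directly that $\delta$ extends to a homomorphism in $\Hom^{-}_c(\Gamma(X,x_0),G\times\Z/2\Z)$, and $G$-conjugacy on both sides matches. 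The converse, reconstructing a $(\alpha,\sigma,c,-)$-pseudo-real Higgs bundle from a reductive $\delta$ by running the correspondence backward, is routine once the preceding translation is in hand. The principal obstacle is the calculation producing $\theta$; once that is secured the remainder parallels Proposition 5.6 of \cite{BGH} essentially verbatim.
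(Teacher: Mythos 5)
Your argument is essentially the proof the paper has in mind: the paper disposes of Theorem \ref{thm2} by citing Proposition 5.6 of \cite{BGH}, and your sketch is exactly the adaptation of that correspondence, with the sign $\widetilde{\alpha}(\varphi)=-\varphi$ combining with an $\widetilde{\alpha}$-invariant harmonic reduction (and the fiberwise compact conjugation $\tau$) to replace the anti-holomorphic twist $\sigma$ by the holomorphic twist $\theta=\sigma\tau$ on the flat-connection/holonomy side. The only loose ends are routine and handled as in \cite{BGH}: the invariance of $h$ requires more than uniqueness up to $\HHH$ (it is automatic at regularly stable points, where the harmonic reduction is actually unique, and needs the usual polystable argument otherwise), and the relation coming from $\widetilde{\alpha}^2=c$ is properly the homomorphism condition $\delta(\gamma_0)^2=\delta(\gamma_0^2)$, i.e. $g_0\theta(g_0)c=\rho(\gamma_0^2)$, rather than $g_0\theta(g_0)=c$, unless the chosen lift $\gamma_0$ squares to the identity.
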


\begin{proof}
This follows from Proposition 5.6 of \cite{BGH}.
\end{proof}



\begin{theorem}\label{thm2a}
Consider the involution $\iota(\alpha,\sigma)^{-}$ of $\cM(G)$. It is anti-holomorphic with
respect to the almost complex structures $J_1$ and $J_3$, and it is holomorphic with
respect to $J_2$.
\end{theorem}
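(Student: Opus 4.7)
The plan is to follow the template of the proof of Theorem \ref{thm0a}, but to exploit the factorization
$$
\iota(\alpha,\sigma)^{-} \,=\, \iota^{+}(\theta)\circ\iota^{-}(\alpha,\tau)
$$
noted in the introduction, so that the extra sign on $\varphi$ is absorbed into the \emph{holomorphic} involution $\theta=\sigma\tau$ on the Betti side. Since $J_3=J_1J_2$, anti-holomorphicity with respect to $J_3$ will follow formally from the statements for $J_1$ and $J_2$, so only those two require independent arguments.

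For $J_1$ the argument is identical to that of Theorem \ref{thm0a}: $J_1$ is the Dolbeault complex structure on $\cM(G)$, and $\iota(\alpha,\sigma)^{-}$ differs from $\iota(\alpha,\sigma)^{+}$ only by the $\C$-linear operation of multiplication by $-1$ on $\varphi$, which commutes with $J_1$. Hence $\iota(\alpha,\sigma)^{-}$ is anti-holomorphic with respect to $J_1$.

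The main content is the $J_2$ case. I pass to the Betti moduli $\calR(G)$ and claim that, under the identification $\cM(G)\cong\calR(G)$, the involution $\iota(\alpha,\sigma)^{-}$ corresponds to the map $\rho\,\longmapsto\,\theta\circ\rho\circ\alpha'$, where $\alpha'$ is the isomorphism of fundamental groups induced by $\alpha$, exactly as in the proof of Theorem \ref{thm0a}. To establish this, I start with a polystable $(E,\varphi)$, its harmonic reduction $h$, and the associated flat connection $D=d_A+\varphi-\tau(\varphi)$ with monodromy $\rho$. Because $\sigma$ commutes with $\tau$, the subgroup $H=G^{\tau}$ is $\sigma$--invariant, so $\alpha^*\sigma(h)$ is a well-defined $H$--reduction of $\alpha^*\sigma(E)$; using that $\tau$ commutes with both $\alpha^*$ and $\sigma$, one checks that it satisfies Hitchin's equation for $\iota(\alpha,\sigma)^{-}(E,\varphi)=(\alpha^*\sigma(E),-\alpha^*\sigma(\varphi))$. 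Expanding the resulting flat connection and using the identities $\sigma\tau=\theta$ together with $\theta|_{\lieh}=\sigma|_{\lieh}$ (so that $\theta(d_A)=\sigma(d_A)$) yields exactly $\alpha^*\theta(D)$, and taking monodromy produces the claimed map. Since $\theta:G\to G$ is holomorphic and $\alpha'$ is a topological isomorphism, the composite $\rho\mapsto\theta\circ\rho\circ\alpha'$ is a biholomorphism of $\calR(G)$ with respect to $J_2$.

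The $J_3$ case is then purely formal: from $J_3=J_1J_2$ together with the anti-$J_1$ and $J_2$-holomorphicity just established, one computes $d\iota\circ J_3=-J_1\circ d\iota\circ J_2=-J_3\circ d\iota$, giving anti-holomorphicity with respect to $J_3$. The one nontrivial step is the $J_2$ computation showing that the flat connection attached to $\iota(\alpha,\sigma)^{-}(E,\varphi)$ is precisely $\alpha^*\theta(D)$; this is where the sign reversal on $\varphi$ interacts with the harmonic metric and the Hitchin equation through $\tau$ so as to convert the anti-holomorphic $\sigma$ into the holomorphic $\theta$, and it is exactly this conversion that accounts for the $J_2$-behavior flipping between Theorems \ref{thm0a} and \ref{thm2a}.
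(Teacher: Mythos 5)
Your proposal is correct and follows essentially the same route as the paper: anti-holomorphicity for $J_1$ from the Dolbeault description, holomorphicity for $J_2$ by identifying $\iota(\alpha,\sigma)^-$ on the Betti side with $\rho\mapsto\theta\circ\rho\circ\alpha'$ (the paper's $\widetilde{b}\circ\alpha''$), and the formal computation from $J_3=J_1J_2$. The only difference is that you spell out, via the harmonic reduction and the identity $D'=\alpha^*\theta(D)$, the identification that the paper merely asserts, and your computation of it is accurate.
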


\begin{proof}
The involution $\iota(\alpha,\sigma)^-$ is clearly anti-holomorphic with respect to $J_1$
because $J_1$ coincides with the complex structure of the Dolbeault moduli space.

Let
$$
\widetilde{b}\, :\, \calR(G) \,=\, \Hom^{+}(\pi_1(X,x_0),G) / G\,\longrightarrow\,
\calR(G)
$$
be the holomorphic involution defined by $\rho\, \longmapsto\, \theta\circ\rho$.
In other words, $\widetilde{b}$ sends a homomorphism $\rho\, :\, \pi_1(X)\,\longrightarrow
\, G$ to the composition
$$
\pi_1(X)\,\stackrel{\rho}{\longrightarrow}\, G\,\stackrel{\theta}{\longrightarrow}
\, G\, .
$$
Clearly $\widetilde{b}$ commutes with the above involution $\alpha''$ in \eqref{bhm}.
Therefore, $\widetilde{b}\circ\alpha''$ is also an involution. The
composition $\widetilde{b}\circ\alpha''$ is holomorphic
because both $\alpha''$ and $\widetilde{b}$ are holomorphic.

The above involution $\widetilde{b}\circ\alpha''$ of $\calR(G)$ coincides with
$\iota(\alpha,\sigma)^-$, and the complex structure of the Betti moduli space
$\calR(G)$ is given by $J_2$.
Therefore, $\iota(\alpha,\sigma)^-$ is holomorphic with respect to $J_2$.

Since $J_3\,=\,J_1J_2$, and $\iota(\alpha,\sigma)^-$
is anti-holomorphic with respect to $J_1$ and holomorphic
with respect to $J_2$, we conclude that $\iota(\alpha,\sigma)^-$ is
anti-holomorphic with respect to $J_3$.
\end{proof}

Consider the complex structure $J_3$ and the corresponding holomorphic symplectic
form $\Omega_3$. Since $\calR(G)$ is hyper-K\"ahler, $\Omega_3$ is flat with respect
to the K\"ahler structure for $J_3$. Now we have following analog of Theorem
\ref{thm1a}.

\begin{theorem}\label{thm1b}
The moduli space $\cM(G,\alpha,\sigma,c,-)$ is a special Lagrangian subspace
of $(\cM(G)\, ,J_1\, , \omega_1\, ,\Omega_1)$. Similarly, it is special Lagrangian with
respect to $(\cM(G)\, ,J_3\, , \omega_3\, ,\Omega_3)$. Also, it is a complex
Lagrangian subspace with respect to $(\calR(G)\, ,J_2\, ,\Omega_2)$.
\end{theorem}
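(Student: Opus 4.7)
The plan is to mirror the strategy of Theorem \ref{thm1a}, using the permuted signs of Theorem \ref{thm2a} instead. Recall that by Theorem \ref{thm2a} the involution $\iota(\alpha,\sigma)^{-}$ is anti-holomorphic with respect to $J_1$ and $J_3$, and holomorphic with respect to $J_2$. Since the hyper-K\"ahler metric is rigidly determined by the three K\"ahler structures on $\cM(G)$ (and $\calR(G)$) and $\iota(\alpha,\sigma)^{-}$ is pulled back from isometries of $X$ and $G$, it is an isometry as well. These two facts are all we will need.

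First I would treat the complex Lagrangian statement with respect to $(\calR(G),J_2,\Omega_2)$. Because $\iota(\alpha,\sigma)^{-}$ is holomorphic for $J_2$, its fixed-point set is a $J_2$-holomorphic subspace. Recall that $\Omega_2\,=\,\omega_3+\sqrt{-1}\omega_1$. Since $\iota(\alpha,\sigma)^{-}$ is anti-holomorphic with respect to $J_1$ and $J_3$ and is an isometry, it sends $\omega_1$ to $-\omega_1$ and $\omega_3$ to $-\omega_3$; consequently it sends $\Omega_2$ to $-\Omega_2$. As in the proof of Theorem \ref{thm1a}, at any fixed point the pullback of $\Omega_2$ coincides with both $\Omega_2$ and $-\Omega_2$ on the tangent space of the fixed locus, forcing $\Omega_2$ to vanish there. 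Combined with the $J_2$-holomorphicity of the fixed locus, this gives the complex Lagrangian property with respect to $(J_2,\Omega_2)$.

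Next I would obtain the two special Lagrangian statements. For each of $J_1$ and $J_3$, the involution $\iota(\alpha,\sigma)^{-}$ is an isometric, anti-holomorphic involution of the underlying Calabi-Yau (the moduli space $\cM(G)$ with the complex structure in question carries a flat holomorphic symplectic/volume form thanks to the hyper-K\"ahler structure). Invoking the standard fact, already used to finish Theorem \ref{thm1a}, that the fixed point locus of an isometric anti-holomorphic involution of a Calabi-Yau is a special Lagrangian submanifold, one deduces that $\cM(G,\alpha,\sigma,c,-)$ is special Lagrangian with respect to both $(\cM(G),J_1,\omega_1,\Omega_1)$ and $(\cM(G),J_3,\omega_3,\Omega_3)$.

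There is no real obstacle: once Theorem \ref{thm2a} supplies the (anti-)holomorphicity pattern, the three assertions are forced by the algebra of the hyper-K\"ahler triple combined with the isometry and the Calabi-Yau/special Lagrangian principle. The only mild delicacy is that these statements hold on the smooth locus, so one should, as elsewhere in the paper, restrict a priori to $\cM(G)^{\mathrm{sm}}$ (or the regularly stable locus) where the hyper-K\"ahler metric and the flat holomorphic volume forms $\Omega_i$ are defined.
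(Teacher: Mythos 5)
Your proposal is correct and follows exactly the route the paper intends: Theorem \ref{thm1b} is stated as the analog of Theorem \ref{thm1a}, with the proof being the same argument run through the sign pattern of Theorem \ref{thm2a} (anti-holomorphic for $J_1,J_3$, holomorphic for $J_2$, so $\Omega_2=\omega_3+\sqrt{-1}\omega_1$ is sent to $-\Omega_2$, giving the complex Lagrangian statement, and the special Lagrangian statements follow from the fixed locus of an isometric anti-holomorphic involution of a Calabi--Yau). Your remark about restricting to the smooth locus is a sensible precaution consistent with the paper's own treatment.
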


\begin{corollary}\label{cor1}
The fixed point locus of the involution $\iota(\alpha,\sigma)^-$ is a complex
subspace of $\cM(G)$ with the complex structure induced by $J_2$, i.e., the
natural complex structure of the moduli space of representations $\calR(G)$.
\end{corollary}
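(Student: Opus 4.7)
The plan is to deduce the corollary directly from Theorem \ref{thm2a}, which already provides the essential content: the involution $\iota(\alpha,\sigma)^-$ is holomorphic with respect to $J_2$. Given this, the statement follows from the general principle that the fixed point set of a holomorphic involution of a complex manifold is a complex submanifold.

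More precisely, I would argue as follows. First, invoke Theorem \ref{thm2a} to conclude that $\iota(\alpha,\sigma)^-\colon \cM(G)\to \cM(G)$ is $J_2$-holomorphic. Recalling that under the non-abelian Hodge correspondence of Theorem \ref{na-Hodge} the complex structure $J_2$ on $\cM(G)$ corresponds to the natural complex structure of the Betti moduli space $\calR(G)$, this means that $\iota(\alpha,\sigma)^-$ is holomorphic as an involution of $\calR(G)$. Second, on the smooth locus $\cM(G)^{\rm sm}$, pick a fixed point $p$ and consider the differential $d\iota(\alpha,\sigma)^-_p$ acting on the holomorphic tangent space $T_p\cM(G)$. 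This is a $J_2$-linear involution, so the tangent space decomposes holomorphically as the direct sum of the $(+1)$- and $(-1)$-eigenspaces. By averaging a chart (or applying the holomorphic slice theorem / equivariant holomorphic implicit function theorem), one obtains local holomorphic coordinates in which the involution acts linearly, so the fixed locus is cut out locally by the vanishing of the $(-1)$-eigenspace coordinates, which is a holomorphic condition. Hence the fixed locus is a complex submanifold of $(\cM(G)^{\rm sm}, J_2)$. Finally, to extend from the smooth locus to the whole variety, I would note that the fixed point locus of a holomorphic self-map of a complex analytic space is again a complex analytic subspace, so that the complex subspace structure extends across singular points.

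The main obstacle, and essentially the only nontrivial input, is knowing that $\iota(\alpha,\sigma)^-$ is $J_2$-holomorphic; but that has already been settled by Theorem \ref{thm2a} via the identification of the Dolbeault and Betti moduli spaces, and via the observation that both the outer involution $\alpha''$ (induced by $\alpha$ on representations of $\pi_1$) and the composition $\widetilde b$ (induced by the holomorphic involution $\theta=\sigma\tau$) act holomorphically on $\calR(G)$. A minor additional point worth mentioning is that, by Proposition \ref{p1}, this fixed point locus is precisely the union, over $c\in Z_2^\sigma$, of the images of the moduli spaces $\cM(G,\alpha,\sigma,c,-)$ of polystable $(\alpha,\sigma,c,-)$-pseudo-real Higgs bundles; thus the corollary asserts that each of these is a complex subvariety of $\calR(G)$, which complements the special Lagrangian statements of Theorem \ref{thm1b} and is consistent with the $(A,B,A)$-brane interpretation in the introduction.
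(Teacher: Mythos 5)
Your proposal is correct and follows essentially the same route as the paper, which deduces Corollary \ref{cor1} directly from the $J_2$-holomorphicity of $\iota(\alpha,\sigma)^-$ established in Theorem \ref{thm2a}, the fixed point locus of a holomorphic involution being a complex subspace. Your additional details on linearizing at fixed points and on the identification of the fixed locus via Proposition \ref{p1} are consistent elaborations of the same argument.
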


\begin{remark}
Corollary \ref{cor1} is obtained by Baraglia--Schaposnik
\cite{baraglia-schaposnik} in the case when
$\sigma$ is the compact conjugation $\tau$.
\end{remark}

\section*{Acknowledgements}

The main results of this paper were presented  at the workshop
on Higgs bundles and pressure metrics held in Aarhus in August 2013. We wish
to thank Joergen Andersen and the Centre for Quantum Geometry of Moduli Spaces 
for the  invitation and hospitality.  We want to thank Steve Bradlow and Laura
Schaposnik for useful discussions.


\begin{thebibliography}{999}

\bibitem{At} M. F. Atiyah, Complex analytic connections in fibre bundles, 
{\it Trans. Amer. Math. Soc.} {\bf 85} (1957), 181--207.

\bibitem{baraglia}
 D. Baraglia, Classification of the automorphism and isometry groups of Higgs 
bundle moduli spaces, preprint, arxiv:1411.2228.

\bibitem{baraglia-schaposnik}
D. Baraglia and L. P. Schaposnik, Higgs bundles and $(A,B,A)$-branes, 
{\it Comm. Math. Phys.} {\bf 331} (2014), 1271--1300.

\bibitem{baraglia-schaposnik2}
D. Baraglia and L. P. Schaposnik, Real structures on moduli spaces of Higgs
bundles, \textit{Adv. Theo. Math. Phys.} (to appear).

\bibitem{BGH} I. Biswas, O. Garc\'{\i}a-Prada  and J. Hurtubise, Pseudo-real
principal Higgs bundles on compact K\"ahler manifolds, 
\textit{Ann. Inst. Fourier} {\bf 64}  (2014), 2527--2562.

\bibitem{BH} I. Biswas and N. Hoffmann, A Torelli theorem for moduli spaces of
principal bundles over a curve, \textit{Ann. Inst. Fourier} \textbf{62} (2012), 87--106.

\bibitem{BS} I. Biswas and G. Schumacher, Yang--Mills equation for stable Higgs
sheaves, \textit{Inter. Jour. Math.} \textbf{20} (2009), 541--556.

\bibitem{bradlow-garcia-prada-mundet}
S.B. Bradlow, O. Garc\'{\i}a-Prada and I. Mundet i Riera
\emph{Relative Hitchin-Kobayashi correspondences for principal
pairs},  Quart. J. Math. \textbf{54} (2003), 171--208.

\bibitem{cartan}
\'E. Cartan, \emph{Les groupes r\'eels simples, finis et continus},
Ann. \'Ec. Norm. Sup. \textbf{31} (1914), 263--355.

\bibitem{corlette}
K.~Corlette, \emph{Flat ${G}$-bundles with canonical metrics}, J. Differential
  Geom. \textbf{28} (1988), 361--382.

\bibitem{donaldson}
S.~K. Donaldson, \emph{Twisted harmonic maps and the self-duality equations},
  Proc. London Math. Soc. \textbf{55} (1987), 127--131.

\bibitem{garcia-prada1} O. Garc\'{\i}a-Prada,
Involutions of the moduli space of $\SL(n,C)$-Higgs bundles and real forms,
in {\em Vector Bundles and Low Codimensional Subvarieties: State of the Art
and Recent Developments}. Quaderni di Matematica, Editors: G. Casnati, F.
Catanese and R. Notari, 2007.

\bibitem{garcia-prada} O. Garc\'{\i}a-Prada,
Higgs bundles and surface group representations, in 
\emph{Moduli Spaces and Vector Bundles}, LMS Lecture Notes Series 359,
265--310, Cambridge University Press, 2009.


\bibitem{garcia-prada-gothen-mundet}
O.~Garc{\'\i}a-Prada, P.~B. Gothen, and I.~Mundet i~Riera,
  \emph{The Hitchin--Kobayashi correspondence, Higgs pairs and surface
  group representations}, 2009,
  \texttt{arXiv:0909.4487 [math.AG]}.

\bibitem{GR} O. Garc\'{\i}a-Prada and S. Ramanan,
Involutions of Higgs bundle moduli spaces, in preparation.

\bibitem{goldman:1984}
W. M. Goldman \emph{The symplectic nature of fundamental groups of
surfaces},
Adv. Math. \textbf{54} (1984), No. 2,  200--225.

\bibitem{helgason}
S.~Helgason, \emph{Differential geometry, {L}ie groups, and symmetric spaces},
  Mathematics, vol.~80, Academic Press, San Diego, 1998.

\bibitem{hitchin:1987}
N.~J. Hitchin, \emph{The self-duality equations on a {R}iemann surface}, Proc.
  London Math. Soc. (3) \textbf{55} (1987), 59--126.

\bibitem{hitchin:duke}
N.~J. Hitchin, \emph{Stable bundles and integrable systems}, Duke
Math. J. \textbf{54} (1987), 91--114.

\bibitem{hitchin1992}
N.~J. Hitchin, \emph{Lie groups and Teichm\"{u}ller space}, Topology \textbf{31}
  (1992), 449--473.

\bibitem{hitchin-mirror}
N.~J. Hitchin, \emph{Higgs bundles and characteristic classes},
arXiv:1308.4603 (2013).

\bibitem{HWW} N.-K. Ho, G. Wilkin and S. Wu, Hitchin's equations on a nonorientable
manifold, arXiv:1211.0746.

\bibitem{kapustin-witten} A. Kapustin and E. Witten, Electric magnetic duality
and the geometric Langlands programme, \emph{Commun.  Number Theory Phys.}
{\bf 1} (2007), 1--236.

\bibitem{kobayashi:1987}
S. Kobayashi, \emph{Differential geometry of complex vector bundles},
Princeton University Press, 1987.

\bibitem{onishchik} A. L. Onishchik, Lectures on real semisimple Lie
algebras and their representations, EMS, 2004.

\bibitem{ramanathan}
A.~Ramanathan, \emph{Stable principal bundles on a compact {R}iemann surface},
  Math. Ann. \textbf{213} (1975), 129--152.

\bibitem{de-siebenthal} 
J. de Siebenthal, \emph{Sur les groupes de Lie compact non-connexes},
 Commentari Math. Helv. \textbf{31} (1956) 41--89.

\bibitem{simpson}
C.~T. Simpson, \emph{Constructing variations of {H}odge structure using
  {Y}ang-{M}ills theory and applications to uniformization}, J. Amer. Math.
  Soc. \textbf{1} (1988), 867--918.

\bibitem{simpson:1992}
C.~T. Simpson, \emph{Higgs bundles and local systems}, Inst. Hautes {\'E}tudes Sci.
  Publ. Math. \textbf{75} (1992), 5--95.

\bibitem{Si}
C.~T. Simpson, Moduli of representations
of the fundamental group of a smooth projective variety. II,
{\it Inst. Hautes \'Etudes Sci. Publ. Math.} \textbf{80} (1994), 5--79.

\end{thebibliography}
\end{document}